\title{The Johansson--Molloy Theorem for DP-Coloring}
\date{}
\author{Anton~Bernshteyn}
\address{Department of Mathematics, University of Illinois at Urbana--Champaign, IL, USA}
\email{bernsht2@illinois.edu}
\thanks{This research is partially supported by the Illinois Distinguished Fellowship.}
\newtheorem{theo}{Theorem}[section]
\newtheorem{lemma}[theo]{Lemma}
\newtheorem{corl}[theo]{Corollary}
\newtheorem*{claim*}{Claim}
\newtheorem*{lemma*}{Lemma}
\theoremstyle{definition}
\newtheorem{defn}[theo]{Definition}
\newtheorem*{assum}{Standing assumptions}
\theoremstyle{remark}
\newtheorem*{remk}{Remark}
\newcommand*{\myproofname}{Proof}
\newenvironment{claimproof}[1][\myproofname]{\begin{proof}[#1]}{\end{proof}}
\newcommand{\0}{\varnothing}
\newcommand{\set}[1]{\{#1\}}
\newcommand{\dom}{\mathrm{dom}}
\newcommand{\N}{\mathbb{N}}
\renewcommand{\epsilon}{\varepsilon}
\renewcommand{\phi}{\varphi}
\renewcommand{\theta}{\vartheta}
\renewcommand{\tilde}{\widetilde}
\renewcommand{\leq}{\leqslant}
\renewcommand{\geq}{\geqslant}
\newcommand{\powerset}[1]{\operatorname{Pow}(#1)}
\newcommand{\defeq}{\coloneqq}
\newcommand{\Cov}[1]{\mathscr{#1}}
\newcommand{\event}[1]{\{{#1}\}}
\renewcommand{\mathbf}[1]{{\bm{#1}}}
\numberwithin{equation}{section}
\newcommand{\neutralize}[1]{\expandafter\let\csname c@#1\endcsname\count@}
\newcommand{\bemph}[1]{{\upshape#1}} % define how emphasised brackets should look
\newcommand{\ep}[1]{\bemph{(}#1\bemph{)}} % parentheses
\begin{document}
	
	\maketitle
	
	\begin{abstract}
		The aim of this note is twofold. On the one hand, we present a streamlined version of Molloy's new proof of the bound $\chi(G) \leq (1+o(1))\Delta(G)/\ln \Delta(G)$ for triangle-free graphs $G$, avoiding the technicalities of the entropy compression method and only using the usual ``lopsided'' Lov\'asz Local Lemma (albeit in a somewhat unusual setting). On the other hand, we extend Molloy's result to DP\=/coloring (also known as correspondence coloring), a generalization of list coloring introduced recently by Dvo\v{r}\'{a}k and Postle.
	\end{abstract}
	
		\subsection*{Basic notation}
		
		All graphs considered here are finite, undirected, and simple. For a graph $G$, its vertex and edge sets are denoted $V(G)$ and $E(G)$ respectively.
		
		For a subset $U \subseteq V(G)$, $\overline{U} \defeq V(G) \setminus U$ is the complement of $U$ and $G[U]$ is the subgraph of $G$ induced by~$U$. Let $G - U \defeq G[\overline{U}]$. For two subsets $U_1$, $U_2 \subseteq V(G)$, $E_G(U_1, U_2)$ denotes the set of all edges of $G$ with one endpoint in $U_1$ and the other one in $U_2$.
		
		For $u \in V(G)$, $N_G(u)$ denotes the set of all neighbors of $u$ in $G$, and $\deg_G(u) \defeq |N_G(u)|$ denotes the degree of $u$. Let $N_G[u] \defeq N_G(u) \cup \set{u}$ be the closed neighborhood of $u$. For $d \in \N$, $N_G^d[u]$ denotes the set of all vertices that are at distance at most $d$ from $u$ (thus, $N_G^0[u] = \set{u}$ and $N_G^1[u] = N_G[u]$). The maximum degree of $G$ is denoted $\Delta(G)$.
		
		Given a subset $U \subseteq V(G)$, let $N_G(U) \defeq \bigcup_{u \in U} N_G(u)$ and $N_G[U] \defeq \bigcup_{u \in U} N_G[u]$. A set $I \subseteq V(G)$ is \emph{independent} if $I \cap N_G(I) = \0$, i.e., if there are no $u$, $v \in I$ with $uv \in E(G)$.
		
		For a set $S$, $\powerset{S}$ denotes the power set of $S$, i.e., the set of all subsets of $S$.
	
	\section{Introduction}
	
	\noindent A \emph{proper coloring} of a graph~$G$ is a function $f \colon V(G) \to C$, where $C$ is a set, whose elements are referred to as \emph{colors}, such that $f(u) \neq f(v)$ for each edge $uv \in E(G)$. The smallest $k \in \N$ such that there exists a proper coloring $f \colon V(G) \to C$ with $|C| = k$ is called the \emph{chromatic number} of $G$ and is denoted~$\chi(G)$. \emph{List coloring} is a generalization of ordinary graph coloring that was introduced independently by Vizing~\cite{Vizing} and Erd\H{o}s, Rubin, and Taylor~\cite{ERT}. A~\emph{list assignment} for $G$ is a function $L \colon V(G) \to \powerset{C}$. %For each $u \in V(G)$, the set $L(u)$ is called the \emph{list} of $u$ and its elements are said to be \emph{available} for~$u$.
	%If $|L(u)| = k$ for all $u \in V(G)$, then~$L$ is called a \emph{$k$-list assignment}.
	A proper coloring $f \colon V(G) \to C$ is called an \emph{$L$-coloring} if $f(u) \in L(u)$ for every $u \in V(G)$. The \emph{list-chromatic number} $\chi_\ell(G)$ of $G$  is the smallest $k \in \N$ such that $G$ admits an $L$\=/coloring for every list assignment $L$ such that $|L(u)| = k$ for all $u \in V(G)$.

	The following is a celebrated result of Johansson~\cite{Joh96}:
	
	\begin{theo}[{Johansson~\cite{Joh96}}]\label{theo:Joh}
		There exists a positive constant $C$ such that for every triangle-free graph~$G$ with maximum degree $\Delta$,
		\[
			\chi_\ell(G) \leq (C+o(1)) \frac{\Delta}{\ln \Delta}.
		\]
	\end{theo}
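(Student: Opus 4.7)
The plan is to prove Theorem~\ref{theo:Joh} by the semi-random ``nibble'' method, replacing the original entropy argument of Johansson with a single application of the Lovász Local Lemma at each step, as has become standard. Fix $C$ large and set $k \defeq \lceil C \Delta / \ln \Delta \rceil$; given an arbitrary $k$-list assignment $L$, I would build an $L$\=/coloring iteratively. At each stage $i$ we maintain a partial proper $L$\=/coloring $f_i$ together with, for every uncolored $v$, a ``surviving list'' $L_i(v) \subseteq L(v)$ consisting of colors not yet used on $N_G(v)$ and still eligible. The goal is to track two parameters $\ell_i \leq |L_i(v)|$ and $d_i \geq $ ``maximum residual color-degree'' (the largest number of uncolored neighbors $u$ of $v$ with a common available color), showing that the ratio $\ell_i / d_i$ grows geometrically and, after $O(\ln \Delta)$ rounds, becomes so favorable that a greedy finish (using $\ell_i > d_i$) completes the coloring.

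The single random step is a ``wasteful coloring'' round: each uncolored $v$ is activated with probability $p \sim \ln \Delta / \Delta$, upon which it proposes a uniformly random color $c(v) \in L_i(v)$, and the proposal is accepted only if no neighbor proposes the same color. Consider the contribution of this round to $|L_{i+1}(v)|$. A color $c \in L_i(v)$ is eliminated only if some uncolored $u \in N_G(v)$ with $c \in L_i(u)$ is activated and proposes $c$; the corresponding removal probability is approximately $1 - (1 - p/\ell_i)^{d_i}$. Triangle-freeness is what makes this heuristic rigorous and sharp: the neighborhood $N_G(v)$ is independent, so the events ``$u$ proposes $c$'' for different $u \in N_G(v)$ are independent, and moreover the lists $L_i(u)$ are jointly determined only by coin flips outside $N_G(v)$ itself. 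The expected values of $|L_{i+1}(v)|$ and the new residual color-degree then evolve according to closed-form recursions which, with $p$ properly tuned, cause $\ell_i / d_i$ to multiply by a constant factor bigger than $1$ per round.

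To turn expected bounds into deterministic ones, I would define, for each uncolored $v$, bad events ``$|L_{i+1}(v)|$ or the new color-degree at $v$ deviates too far from its expectation'' and apply the LLL. Concentration of each random variable around its mean follows from Talagrand's inequality, since activating or de-activating a single vertex changes the quantities by only $O(1)$; this yields failure probability exponentially small in $\ell_i$, so the tiny dependency neighborhood (at most $O(\Delta^2)$ other bad events, living in $N_G^2[v]$) is easily absorbed. Iterating the step while verifying the invariants survive then yields the theorem. The main obstacle is precisely this concentration analysis for $|L_{i+1}(v)|$: even with triangle-freeness, the random variable is a complicated function of many weakly correlated coin flips, and designing the Talagrand certificate so that the Lipschitz constant is bounded independently of $\Delta$ — this is where the triangle-free hypothesis is really exploited — is the delicate ingredient around which the whole argument is organized.
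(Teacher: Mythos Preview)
Your sketch is the classical semi-random nibble proof of Johansson's theorem, and with care in the Talagrand step it does yield $\chi_\ell(G)=O(\Delta/\ln\Delta)$ for triangle-free $G$. The paper, however, does not argue this way: Theorem~\ref{theo:Joh} is obtained as a corollary of the stronger Theorem~\ref{theo:main}, whose proof avoids iteration entirely. Rather than running $O(\ln\Delta)$ rounds of wasteful coloring and maintaining list/color-degree invariants via repeated LLL~$+$~Talagrand, the paper draws a partial proper coloring \emph{uniformly at random} in a single shot and applies the lopsided Local Lemma once to this highly dependent distribution. The key device is observation~\eqref{eq:star}: conditioned on the coloring outside $N_G(u)$, the restriction to $N_G(u)$ is still uniform over partial proper colorings there, and since triangle-freeness makes $N_G(u)$ independent, this conditional distribution factors into independent per-vertex choices, which is what drives Lemma~\ref{lemma:conc}. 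Your iterative route is the historical one and is more robust to variations of the problem, but it is technically heavier and, as you state it (``fix $C$ large''), only delivers some unspecified constant; the paper's one-shot argument is shorter and yields $C=1$. One imprecision worth flagging: the lists $L_i(u)$ for $u\in N_G(v)$ are not ``determined only by coin flips outside $N_G(v)$''---they depend on the full history at $N_G^2[v]$---but the independence you actually need (that, given the state at the start of round $i$, the proposals inside $N_G(v)$ are independent) is correct and is what triangle-freeness buys.
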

	
	\begin{remk}
		Throughout, we use $o(1)$ to indicate a function of $\Delta$ that approaches $0$ as $\Delta \to \infty$.
	\end{remk}

	Johansson originally proved Theorem~\ref{theo:Joh} with $C = 9$. Subsequently, Pettie and Su~\cite{PS15} improved the bound to $C = 4$. Very recently, Molloy~\cite{Mol17} reduced the constant to $C = 1$:
	
	\begin{theo}[{Molloy~\cite[Theorem~1]{Mol17}}]\label{theo:Mol}
		For every triangle-free graph~$G$ with maximum degree~$\Delta$,
		\[
		\chi_\ell(G) \leq (1+o(1)) \frac{\Delta}{\ln \Delta}.
		\]
	\end{theo}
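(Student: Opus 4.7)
The plan is to follow the \emph{wasteful} (or \emph{semi-random}) coloring method underlying Molloy's argument, with the Lov\'asz Local Lemma playing the role of entropy compression. Fix a list assignment $L$ with $|L(v)| = k \defeq \lceil (1 + \epsilon)\Delta/\ln \Delta \rceil$ for every vertex $v$, where $\epsilon = \epsilon(\Delta) \to 0$ is a slowly vanishing error term. The strategy is to iterate one round of a carefully designed random partial coloring procedure until the ``effective degree'' of each still-uncolored vertex is strictly smaller than the size of its remaining list, at which point the coloring is completed greedily.

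In one round I would assign each vertex $v$ an independent uniformly random tentative color $c_v \in L(v)$ and then \emph{activate} $v$ (permanently coloring it with $c_v$) if and only if no neighbor $u \in N_G(v)$ satisfies $c_u = c_v$. For an unactivated $v$ this produces a shortened list $L'(v)$ consisting of those $c \in L(v)$ not used by any activated neighbor, together with a post-round effective degree $d'(v)$ counting pairs $(u,c)$ with $u$ an unactivated neighbor of $v$ and $c \in L'(v) \cap L(u)$. The invariant to preserve across rounds is that $\ell'(v) \defeq |L'(v)|$ and $d'(v)$ satisfy $\ell'(v) \geq (1+\epsilon)\, d'(v)/\ln d'(v)$.

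The heart of the proof, and the source of the leading constant $C = 1$, is the expectation calculation that exploits triangle-freeness. A first-order analysis gives $\mathbb{E}[\ell'(v)] \approx k(1 - 1/k)^{\Delta}$ and $\mathbb{E}[d'(v)] \approx \Delta(1 - 1/k)^{\Delta}$, leaving the ratio $\ell(v)/d(v)$ essentially unchanged. The extra saving comes from the observation that when two neighbors $u_1, u_2 \in N_G(v)$ both pick the same color $c \in L(v)$, \emph{neither} is activated, so $c$ is retained in $L'(v)$ even though it has been ``hit'' twice; crucially, the pair $u_1, u_2$ is always allowed because triangle-freeness gives $u_1 u_2 \notin E(G)$. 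Summing this second-order contribution over unordered pairs of neighbors yields an extra multiplicative factor of order $e^{1/k}$ in $\mathbb{E}[\ell'(v)]/\mathbb{E}[d'(v)]$, which compounds across rounds to drive the constant from any $O(1)$ down to $1+o(1)$.

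The last step is to pass from expectations to almost-sure guarantees so that the invariant survives a round. For each vertex $v$ I would define the bad event $B_v$ that $\ell'(v)$ is too small or $d'(v)$ is too large relative to its expectation; note that $B_v$ depends only on the tentative colors chosen in $N_G^2[v]$, so any $B_v$ is mutually independent of the events outside a ball of radius $4$, giving an LLL dependency graph of maximum degree $O(\Delta^4)$. I expect the main obstacle to be establishing concentration for $\ell'(v)$ and $d'(v)$ with tail bounds of order $\exp(-\Omega(\ln^2\Delta))$ --- sharp enough to overcome the polynomial degree of the dependency graph in the lopsided Lov\'asz Local Lemma. I would address this via Talagrand's inequality, exploiting that each individual tentative color affects $\ell'(v)$ and $d'(v)$ by only $O(1)$. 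Applying the LLL then yields a round that preserves the invariant, and iterating a bounded number of times drives $d'(v)$ below $\ln\Delta$, after which $\ell'(v) > d'(v)$ and the residual instance is finished greedily.
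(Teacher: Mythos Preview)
Your proposal follows a fundamentally different route from the paper. The paper does \emph{not} iterate a wasteful round with independently chosen tentative colors; instead it samples, in a single shot, a partial proper $L$-coloring uniformly at random (equivalently, a uniformly random independent set $\mathbf{I}$ in the cover graph $H$) and applies the lopsided LLL directly to that object. The point is that, conditioned on $\mathbf{I}$ outside $N_G(u)$, the restriction of $\mathbf{I}$ to $L(N_G(u))$ is again uniform over the independent subsets of the residual lists; because $N_G(u)$ is independent in a triangle-free graph, this conditional law is a \emph{product} measure, and a coupon-collector estimate (Lemma~\ref{lemma:conc}) gives $\mathbb{E}[|L_{\mathbf{I}}(u)|] \geq k e^{-\Delta/k} \geq 2\ell$ with no loss. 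One application of the LLL then produces a partial coloring whose residual cover satisfies the hypotheses of Lemma~\ref{lemma:small_deg}, and the proof ends---no iteration, no Talagrand.

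Your scheme is the Johansson/Pettie--Su template, and the analysis you sketch does not reach $C=1$. In particular, your second-order step is backwards: under your own activation rule, $u_1$ is deactivated only when some \emph{neighbor} of $u_1$ also picks $c_{u_1}$. Triangle-freeness says precisely that $u_1u_2 \notin E(G)$ for $u_1,u_2 \in N_G(v)$, so $u_2$ choosing $c$ does \emph{not} deactivate $u_1$; both may be activated with $c$, and then $c$ is lost from $L'(v)$---the opposite of what you claim. The genuine role of triangle-freeness in Molloy's argument is not a second-order correction inside an independent-coloring round; it is what makes the conditional distribution of a uniform \emph{proper} partial coloring factor over $N_G(u)$, which is exactly what yields the sharp constant in one step. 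Absent that mechanism, the iterative procedure you describe is the pre-Molloy method, for which only $C=4$ was known.
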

	
	The two main new ideas that allowed Molloy to dramatically simplify Johansson's proof and establish Theorem~\ref{theo:Mol} are:
	\begin{itemize}
		\item[--] a new coupon collector\=/type result \cite[Lemma~7]{Mol17} with elements drawn uniformly at random from possibly distinct sets; and
		\item[--] the use of the entropy compression method instead of iterated applications of the Lov\'{a}sz Local Lemma.
	\end{itemize}
	%a new coupon collector\=/type result \cite[Lemma~6]{Mol17} with elements drawn uniformly at random from possibly distinct sets and (\emph{b}) the use of the entropy compression method instead of iterated applications of the Lov\'{a}sz Local Lemma.
	
	The entropy compression method (the name is due to Tao~\cite{Tao}) was developed by Moser and Tardos in order to prove an algorithmic version of the Lov\'asz Local Lemma. Later it was observed (first by Grytczuk, Kozik, and Micek in their study of nonrepetitive sequences~\cite{Grytczuk}) that the  Moser--Tardos algorithmic approach can sometimes lead to improved combinatorial results if applied directly, with no explicit mention of the Lov\'asz Local Lemma. This technique has since found many applications, especially in the study of graph coloring; see, e.g., \cite{Esperet, BCGR, Duj}. The recent results of Molloy are a part of this program.
	
	One may wonder however why the entropy compression method should be significantly superior to the Local Lemma when applied specifically to the problem of coloring triangle-free graphs. Indeed, there is a lot of ``slackness'' in the way the Local Lemma is used in Johansson's proof of Theorem~\ref{theo:Joh}: certain events happen with exponentially small probabilities, even though a polynomial upper bound would have sufficed. In other words, the ``bottleneck'' in the proof is not the Local Lemma \emph{per se}, but rather some expectation/concentration details. Thus, it may appear surprising that using a better alternative to the Local Lemma leads to improvements in this particular case.
	
	In this note we show that the intuition outlined in the previous paragraph is, in fact, accurate: one can replace the entropy compression method in Molloy's proof of Theorem~\ref{theo:Mol} by the usual Local Lemma. This makes the argument particularly short and straightforward, as it removes the need for the technical analysis of a randomized recoloring procedure.
	
	The main novelty in our version of the proof consists in choosing a partial proper coloring $f$ of~$G$ \emph{uniformly at random} (see~Lemma~\ref{lemma:main}). Note that the colors of individual vertices under $f$ are highly dependent, so understanding the behavior of $f$ at first appears rather difficult. That is why one usually assigns colors to the vertices of~$G$ \emph{independently} from each other. But independence comes at a price: It is impossible to ensure that the resulting coloring is proper away from a very small part of the graph. This necessitates an iterative approach, forcing one to repeat the procedure several times until a sufficiently large proportion of the vertices has been colored. Our main observation is that, despite the dependencies, it is still possible to use the Local Lemma to directly analyze a uniformly random partial proper coloring, thus obviating the need for iteration.
	
	Using the Local Lemma instead of the entropy compression is the only significant difference between our argument and the original proof of Theorem~\ref{theo:Mol} due to Molloy. In particular, we need a coupon collector\=/type lemma (Lemma~\ref{lemma:conc}), which is, essentially, a rephrasing of \cite[Lemma~7]{Mol17}. Nevertheless, to make the presentation self-contained, we include all (or most of) the details.
	
	Our second contribution is verifying the conclusion of Theorem~\ref{theo:Mol} in the context of DP-coloring (also known as \emph{correspondence coloring})---a generalization of list coloring introduced recently by Dvo\v{r}\'{a}k and Postle~\cite{DP} (see Section~\ref{sec:DP} for the definitions). A version of Johansson's theorem for DP-coloring was established previously by the author in~\cite[Theorem~1.7]{Ber}, with no attempt to optimize the constant factor.
	
	\begin{theo}\label{theo:main}
		For every triangle-free graph~$G$ with maximum degree~$\Delta$,
		\[
		\chi_{DP}(G) \leq (1+o(1)) \frac{\Delta}{\ln \Delta}.
		\]
	\end{theo}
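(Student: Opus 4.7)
The plan is to lift Molloy's proof of Theorem~\ref{theo:Mol} to the DP setting by working throughout with a DP-cover $\mathcal{H} = (L, H)$ of $G$ (in the terminology of Section~\ref{sec:DP}): a DP-coloring is then an independent transversal of the cover graph $H$, with the ``lists'' $L(v)$ replaced by fibres of the cover. The two main ingredients announced in the introduction, Lemma~\ref{lemma:main} and the coupon-collector bound Lemma~\ref{lemma:conc}, are already formulated so as to avoid any use of a ``same color at different vertices'' relation, so the list-coloring argument should carry over almost verbatim.

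Fix $k \defeq \lceil (1+\epsilon) \Delta / \ln \Delta \rceil$ for a small $\epsilon > 0$ and assume each fibre $L(v)$ has size at least $k$. First I would apply Lemma~\ref{lemma:main} to sample a partial independent transversal $f$ of $H$ \emph{uniformly at random}; for each uncolored vertex $v$, let $S(v) \subseteq L(v)$ be the set of colors at $v$ that are not matched, via an edge of $H$, to $f(u)$ for any colored neighbor $u$ of $v$. Using the Local Lemma with bad events of the form ``$|S(v)|$ is significantly below its expected value,'' I would extract an outcome of $f$ for which $|S(v)|$ is simultaneously large at every uncolored $v$, and in fact exceeds the number of still-uncolored neighbors of $v$; a standard greedy step then completes $f$ to a full $\mathcal{H}$-coloring.

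The concentration for $|S(v)|$ is supplied by Lemma~\ref{lemma:conc}, which is tailored to colors ``drawn from possibly distinct sets''---precisely the situation in DP-coloring, where the matchings between different fibres are arbitrary bijections and the ``same color'' shortcut available in ordinary list coloring no longer exists. Triangle-freeness enters as in Molloy's proof: for two non-adjacent neighbors $u_1$, $u_2$ of $v$, the killings of individual colors of $L(v)$ by $f(u_1)$ and by $f(u_2)$ act on disjoint portions of the cover and are sufficiently close to independent, so that Lemma~\ref{lemma:conc} yields an expected survival rate of roughly $k (1 - 1/k)^{d}$, where $d$ is the number of uncolored neighbors of $v$; for our choice of $k$ this is comfortably larger than $d$.

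The main obstacle will be controlling the long-range dependencies introduced by the uniform distribution on partial proper DP-colorings: the value of $f$ at one vertex is globally correlated with the values at all other vertices, whereas the Local Lemma requires the bad event at $v$ to depend only on $f$ inside a bounded-radius neighborhood of $v$. The plan for dealing with this, which is really the heart of Lemma~\ref{lemma:main}, is to condition on the set of colored vertices together with the restriction of $f$ outside a small ball around $v$, and then to argue that the conditional distribution of $f$ on that ball is again uniform over partial proper $\mathcal{H}$-colorings of a smaller, still triangle-free, sub-instance. This reduces the bad event at $v$ to a local one and puts it back into the usual lopsided Local Lemma framework. Once this is in place, the DP analogues of Molloy's concentration and greedy-completion steps close out the proof of Theorem~\ref{theo:main}.
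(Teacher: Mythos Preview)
Your overall framework matches the paper: sample a uniformly random independent set $\mathbf{I}$ in the cover graph $H$ (equivalently, a uniformly random partial proper DP-coloring), apply the lopsided Local Lemma with bad events indexed by vertices, and use the key observation that conditioning on $\mathbf{I}\cap L(\overline{N_G(u)})=J$ leaves $\mathbf{I}\cap L(N_G(u))$ uniform over the independent subsets of $L_J(N_G(u))$ to make the bad event at $u$ local. That part is right.

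The gap is in the completion step. Your bad event only tracks $|S(v)|=|L_\mathbf{I}(v)|$, and you assert that after the LLL this ``in fact exceeds the number of still-uncolored neighbors of $v$,'' so that greedy finishes. But nothing in your argument bounds $\deg_{G_\mathbf{I}}(v)$, and for the optimal constant it cannot: with $k=(1+\epsilon)\Delta/\ln\Delta$ one has $\mathbb{E}[|L_\mathbf{I}(v)|]\approx \Delta^{\epsilon/(1+\epsilon)}$, while after conditioning on an adversarial $J$ the number of uncolored neighbors of $v$ can be far larger than this. The greedy route can be pushed through (this is essentially what the paper does in the $K_r$-free case, Lemma~\ref{lemma:Kr_bounds}\ref{item:bKr}), but only when the target list size $\ell$ satisfies $\ell^2\gg\Delta$, which forces $\epsilon>1$ and yields only $\chi_{DP}(G)\leq(2+o(1))\Delta/\ln\Delta$.

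What the paper actually does for the sharp constant is different: the bad event at $u$ has a \emph{second clause}, namely that some surviving colour $x\in L_\mathbf{I}(u)$ has $\deg^\ast_{\Cov{H}_\mathbf{I}}(x)>\ell/2$ in the residual cover graph; this is part~\ref{item:b} of Lemma~\ref{lemma:conc}. After the LLL one then has $|L_I(u)|\geq\ell$ and $\deg^\ast_{\Cov{H}_I}(x)\leq\ell/2$ for every surviving colour $x$, and the completion is \emph{not} greedy but an application of the Aharoni--Haxell independent-transversal theorem (Lemma~\ref{lemma:small_deg}). In short, you need to control the degree of each \emph{colour} in the residual cover, not the degree of each \emph{vertex} in the residual graph, and finish with Aharoni--Haxell rather than a greedy sweep.
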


	Theorem~\ref{theo:main} yields a rather tight bound on the DP-chromatic number of triangle-free regular graphs (the lower bound is given by~\cite[Theorem~1.6]{Ber}; cf.~\cite[Corollary~1.8]{Ber}):
	
	\begin{corl}
		For every $\Delta$-regular triangle-free graph $G$,
		\[
			(1/2-o(1))\frac{\Delta}{\ln \Delta} \leq \chi_{DP}(G) \leq (1+o(1)) \frac{\Delta}{\ln \Delta}.
		\]
	\end{corl}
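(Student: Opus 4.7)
The corollary is essentially a bookkeeping result that combines two separate bounds, so my plan is to verify that each half follows from a cited input with nothing more than a notational check; no new ideas are needed.

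For the upper bound, I would simply invoke Theorem~\ref{theo:main}. A $\Delta$-regular graph has maximum degree exactly $\Delta$, and by hypothesis it is triangle-free, so Theorem~\ref{theo:main} applies verbatim to give $\chi_{DP}(G)\leq(1+o(1))\Delta/\ln\Delta$. There is literally no further work: the only thing to check is that the $o(1)$ in Theorem~\ref{theo:main}, which is a function of $\Delta$ tending to $0$, is compatible with the $o(1)$ in the corollary's statement, which it is by the remark following Theorem~\ref{theo:Joh}.

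For the lower bound, I would quote \cite[Theorem~1.6]{Ber}, which supplies the matching $(1/2-o(1))\Delta/\ln\Delta$ lower bound on $\chi_{DP}$ for $\Delta$-regular triangle-free graphs. Since this is an external result, no further argument is required here either; the corollary is just the concatenation of the two inequalities.

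I do not expect any real obstacle. The only point worth double-checking is that the hypotheses of \cite[Theorem~1.6]{Ber} align exactly with ours (a $\Delta$-regular triangle-free graph, with $\Delta$ sufficiently large so that the $o(1)$ terms make sense), which is already noted in \cite[Corollary~1.8]{Ber}. Consequently, the proof reduces to a two-line citation sandwich, and nothing quantitative beyond what is already proven in Theorem~\ref{theo:main} and in \cite{Ber} needs to be established.
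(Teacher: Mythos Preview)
Your proposal is correct and matches the paper's approach exactly: the paper does not give a separate proof of this corollary but simply notes that the upper bound is Theorem~\ref{theo:main} and the lower bound is \cite[Theorem~1.6]{Ber}. The only tiny imprecision is that \cite[Theorem~1.6]{Ber} does not require triangle-freeness (it holds for all graphs of average degree $\Delta$), but this only makes the citation easier, not harder.
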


	Another result of Johansson asserts that $\chi_\ell(G) = O(\Delta(G) \ln \ln \Delta(G) / \ln \Delta(G))$ if $G$ is $K_r$-free for some fixed $r\geq 4$. Molloy~\cite[Theorem~2]{Mol17} also gave a new short proof of this bound with explicit dependence on $r$. In Section~\ref{sec:Kr}, we extend it to DP-coloring:
	
	\begin{theo}\label{theo:Kr}
		There exists a positive constant $C$ such that for any $r \geq 4$ and for every $K_r$-free graph~$G$ with maximum degree $\Delta$,
		\[
			\chi_{DP}(G) \leq C\frac{r\Delta \ln \ln \Delta}{\ln \Delta}.
		\]
	\end{theo}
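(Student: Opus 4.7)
The plan is to follow the strategy of the proof of Theorem~\ref{theo:main}, with two changes: the list size is inflated to $L \defeq \lceil C r \Delta \ln \ln \Delta / \ln \Delta \rceil$ for a sufficiently large absolute constant $C$, and the triangle-free structural input is replaced by a $K_r$-free one. Fix any DP-cover $\mathcal{H}$ of $G$ whose fibers have size $L$; the goal is to exhibit an $\mathcal{H}$-independent transversal by analyzing a uniformly random proper partial ``coloring'' through $\mathcal{H}$ and invoking the Lov\'asz Local Lemma to extend it to $V(G)$.

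First I would select a uniformly random proper partial coloring $f$ as in Lemma~\ref{lemma:main}, and for each vertex $v$ consider the set $K(v) \subseteq L(v)$ of fiber elements not blocked by $f$. As in the triangle-free case, the extension step reduces via the Local Lemma to a lower bound on $|K(v)|$ at every $v$, and that bound is delivered by the coupon collector concentration of Lemma~\ref{lemma:conc}. The key difference in structural input is that the independence of $N(v)$---responsible for the clean $\ln \Delta$ savings in the triangle-free case---is no longer available; instead, $G[N(v)]$ is only known to be $K_{r-1}$-free.

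To compensate, I would use an Ajtai--Koml\'os--Szemer\'edi / Shearer-type bound applied inductively on $r$ (exactly as in Molloy's treatment of Theorem~2 of~\cite{Mol17}) to locate, inside $N(v)$, an independent subset of size $\Omega(\Delta \ln \Delta / (r \ln \ln \Delta))$. Restricting the analysis of $K(v)$ to the contribution of colors blocked by such an independent subset---on which the partial coloring $f$ assigns conditionally independent colors, so that Lemma~\ref{lemma:conc} applies verbatim---produces the same kind of concentration estimate as before, only with the $\ln \Delta$ factor replaced by $\ln \Delta / (r \ln \ln \Delta)$. This extra factor is precisely what the larger choice of $L$ is designed to absorb.

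The main obstacle will be tracking the $r$-dependence cleanly through the inductive Shearer-type extraction and through the LLL verification: one must verify that the bad events ``$|K(v)|$ is too small'' remain local (with dependency degree polynomial in $\Delta$) and that their probabilities decay fast enough, despite the inflated $L$, for the LLL to close. Once this is in place, the deterministic extension step used for Theorem~\ref{theo:main} applies without modification, giving the desired $\mathcal{H}$-independent transversal.
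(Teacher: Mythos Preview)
Your proposal has a genuine gap. The step ``locate inside $N(v)$ an independent subset $S$ of size $\Omega(\Delta \ln \Delta /(r\ln\ln\Delta))$ and restrict the analysis of $K(v)$ to $S$'' does not work, for two reasons. First, the claimed size exceeds $|N(v)| \leq \Delta$, so no such $S$ exists; Shearer/AKS applied to the $K_{r-1}$-free graph $G[N(v)]$ only guarantees an independent set of size roughly $\Delta^{1/(r-2)}$ or a polylogarithmic quantity, depending on the version used. Second, and more fundamentally, even if you condition on $\mathbf{I}'\cap L(N(v)\setminus S)$ to make the restriction to $L(S)$ a product measure, a color $x\in L(v)$ can be blocked by vertices of $N(v)\setminus S$, and you have no control over how many colors survive that conditioning. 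Since $|N(v)\setminus S|$ can be close to $\Delta$ while $|L(v)| = O(r\Delta\ln\ln\Delta/\ln\Delta) \ll \Delta$, the conditioning may already kill every color, so Lemma~\ref{lemma:conc} has nothing left to work with.

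The paper's argument is structurally different. It does not pick a single independent subset of $N(u)$; instead, it partitions $L_J(N_G(u))$ into \emph{layers} $\Lambda(x) = N_{H^\ast}(x)\cap L_J(N_G(u))$, one for each $x\in L(u)$, and resamples $\mathbf{I}'$ layer by layer. Each layer meets every fiber $L(w)$ in at most one point, so the induced graph $\mathbf{F}_i$ on a layer is (isomorphic to a subgraph of $G[N_G(u)]$ and hence) $K_{r-1}$-free. The Shearer-type input is then Lemma~\ref{lemma:She}, a bound on the \emph{median} independent-set size in terms of the \emph{count} of independent sets, applied to each $\mathbf{F}_i$ separately: either many layers have $\operatorname{ind}(\mathbf{F}_i)$ small (so $x_i$ survives with decent probability), or many layers have large median independent sets, forcing $|\mathbf{I}'|>\Delta$, a contradiction. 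Finally, the extension step is not the same as in Theorem~\ref{theo:main}: rather than bounding $\deg^\ast_{\Cov{H}_I}$, the paper shows $\Delta(G_I)<\ell\leq |L_I(u)|$ (Lemma~\ref{lemma:Kr_bounds}\ref{item:bKr}), after which greedy coloring finishes.
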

	
	We make no attempt to optimize the constant factor in Theorem~\ref{theo:Kr}. It is conjectured~\cite[Conjecture~3.1]{AKS} that the correct upper bound for fixed $r$ should be of the order $O(\Delta/\ln \Delta)$.
	
	The main technical step in the proof of Theorem~\ref{theo:Kr} is Lemma~\ref{lemma:Kr_bounds}. It is similar to \cite[Lemma~15]{Mol17}; however, it is necessary to modify the proof of \cite[Lemma~15]{Mol17} somewhat in order to adapt it for the DP-coloring framework, since, in contrast to list coloring, a DP-coloring of a graph cannot be naturally represented as a partition of its vertex set into independent subsets.
	
	\section{DP-Coloring}\label{sec:DP}
	
	\noindent DP-coloring was introduced by Dvo\v{r}\'{a}k and Postle in~\cite{DP} in order to settle a long-standing open question of Borodin regarding list coloring planar graphs with no cycles of certain lengths~\cite[Problem~8.1]{Bor13}. Just like list coloring extends ordinary coloring by allowing the lists of available colors to vary from vertex to vertex, DP-coloring generalizes list coloring by allowing the identifications between the colors in the lists to vary from edge to edge.
	
	\begin{defn}\label{defn:cover}
		Let $G$ be a graph. A \emph{cover} of $G$ is a pair $\Cov{H} = (L, H)$, consisting of a graph $H$ and a function $L \colon V(G) \to \powerset{V(H)}$, satisfying the following requirements:
		\begin{enumerate}[labelindent=\parindent,leftmargin=*,label=(C\arabic*)]
			\item the sets $\set{L(u) \,:\,u \in V(G)}$ form a partition of $V(H)$;
			\item for every $u \in V(G)$, the graph $H[L(u)]$ is complete;
			\item if $E_H(L(u), L(v)) \neq \0$, then either $u = v$ or $uv \in E(G)$;
			\item \label{item:matching} if $uv \in E(G)$, then $E_H(L(u), L(v))$ is a matching.
		\end{enumerate}
		A cover $\Cov{H} = (L, H)$ of $G$ is \emph{$k$-fold} if $|L(u)| = k$ for all $u \in V(G)$.
	\end{defn}
	
	\begin{remk}
		The matching $E_H(L(u), L(v))$ in Definition~\ref{defn:cover}\ref{item:matching} does not have to be perfect and, in particular, is allowed to be empty.
	\end{remk}
	
	%The second cover of $C_4$ shown in Fig.~\ref{fig:cycle} proves that $\chi_{DP}(C_4) \geq 3$.
	
	%Fig.~\ref{fig:cycle} shows an example of two distinct $2$-fold covers of $G \cong C_4$.
	
	\begin{defn}
		Let $\Cov{H} = (L, H)$ be a cover of a graph $G$. An \emph{$\Cov{H}$-coloring} of $G$ is an independent set in $H$ of size $|V(G)|$.
	\end{defn}
	
	\begin{remk}\label{remk:single}
		Equivalently, an independent set $I$ in $H$ is an $\Cov{H}$-coloring if $|I \cap L(u)| = 1$ for all $u \in V(G)$.
	\end{remk}
	
	\begin{defn}
		The \emph{DP-chromatic number} $\chi_{DP}(G)$ of a graph $G$ is the smallest $k \in \N$ such that~$G$ admits an $\Cov{H}$-coloring for every $k$-fold cover $\Cov{H}$ of $G$.
	\end{defn}
	
	%\begin{exmp}
	Fig.~\ref{fig:cycle} shows a pair of distinct $2$-fold covers of the $4$-cycle $C_4$. Note that $C_4$ admits an $\Cov{H}_1$\=/coloring but not an $\Cov{H}_2$\=/coloring. In particular, $\chi_{DP}(C_4) \geq 3$. On the other hand, it can be easily seen that $\chi_{DP}(G) \leq \Delta(G) + 1$ for any graph $G$, and so $\chi_{DP}(C_4) = 3$. A similar argument demonstrates that $\chi_{DP}(C_n) = 3$ for any cycle $C_n$ of length $n \geq 3$.
	%\end{exmp}
	
	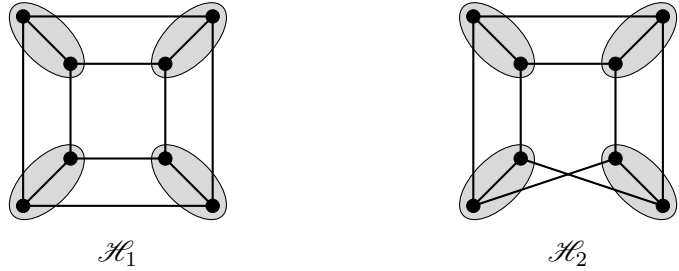
\begin{figure}[h]
		\centering	
		\begin{tikzpicture}[scale=0.63]
		\definecolor{light-gray}{gray}{0.85}
		
		\filldraw[fill=light-gray]
		(6.5,0) circle [x radius=1cm, y radius=5mm, rotate=45]
		(6.5,3) circle [x radius=1cm, y radius=5mm, rotate=-45]
		(9.5,0) circle [x radius=1cm, y radius=5mm, rotate=-45]
		(9.5,3) circle [x radius=1cm, y radius=5mm, rotate=45];
		
		\foreach \x in {(6, -0.5), (6, 3.5), (7, 0.5), (7, 2.5), (9, 0.5), (9, 2.5), (10, -0.5), (10, 3.5)}
		\filldraw \x circle (4pt);
		
		\draw[thick] (6, -0.5) -- (6, 3.5) -- (10, 3.5) -- (10, -0.5) -- cycle;
		
		\draw[thick] (7, 0.5) -- (7, 2.5) -- (9, 2.5) -- (9, 0.5) -- cycle;
		
		\draw[thick] (6, -0.5) -- (7, 0.5) (6, 3.5) -- (7, 2.5) (9, 2.5) -- (10, 3.5) (10, -0.5) -- (9, 0.5);
		
		\node at (8, -1.5) {$\Cov{H}_1$};
		
		\filldraw[fill=light-gray]
		(13+3,0) circle [x radius=1cm, y radius=5mm, rotate=45]
		(13+3,3) circle [x radius=1cm, y radius=5mm, rotate=-45]
		(16+3,0) circle [x radius=1cm, y radius=5mm, rotate=-45]
		(16+3,3) circle [x radius=1cm, y radius=5mm, rotate=45];
		
		\foreach \x in {(12.5+3, -0.5), (12.5+3, 3.5), (13.5+3, 0.5), (13.5+3, 2.5), (15.5+3, 0.5), (15.5+3, 2.5), (16.5+3, -0.5), (16.5+3, 3.5)}
		\filldraw \x circle (4pt);
		
		\draw[thick] (12.5+3, -0.5) -- (12.5+3, 3.5) -- (16.5+3, 3.5) -- (16.5+3, -0.5) -- (13.5+3, 0.5) -- (13.5+3, 2.5) -- (15.5+3, 2.5) -- (15.5+3, 0.5) -- cycle;
		
		\draw[thick] (12.5+3, -0.5) -- (13.5+3, 0.5) (12.5+3, 3.5) -- (13.5+3, 2.5) (15.5+3, 0.5) -- (16.5+3, -0.5) (15.5+3, 2.5) -- (16.5+3, 3.5);
		
		\node at (14.5+3, -1.5) {$\Cov{H}_2$};
		\end{tikzpicture}
		\caption{Two distinct $2$-fold covers of a $4$-cycle.%Note that $C_4$ is $(L, H_1)$-colorable but not $(L, H_2)$-colorable. In particular, $\chi_{DP}(C_4) \geq 3$ (in fact, $\chi_{DP}(C_4) = 3=\Delta(C_4) +1$).
		}\label{fig:cycle}
	\end{figure}
	
	One can obtain a cover of a graph $G$ from a list assignment for~$G$, thus showing that list coloring is a special case of DP-coloring and, in particular, $\chi_{DP}(G) \geq \chi_\ell(G)$ for all graphs $G$. Specifically, let~$G$ be a graph and let $L \colon V(G) \to \powerset{C}$ be a list assignment for~$G$. Let $H$ denote the graph with vertex set
	$
	V(H) \defeq \set{(u, c)\,:\, u \in V(G) \text{ and } c \in L(u)}
	$,
	in which two distinct vertices $(u, c)$ and $(v, d)$ are adjacent if and only if either $u = v$, or else, $uv \in E(G)$ and $c=d$.
	%$
	%	\text{either } u = v, \qquad \text{or else,}\qquad uv \in E(G) \text{ and } c = d.
	%$
	For each $u \in V(G)$, set
	$
	L'(u) \defeq \set{(u, c) \,:\, c \in L(u)}
	$.
	Then $\Cov{H} \defeq (L', H)$ is a cover of $G$, and there is a natural bijective correspondence between the $L$-colorings and the $\Cov{H}$\=/colorings of $G$. More precisely, if $f$ is an $L$\=/coloring of $G$, then the set
	$
	I_f \defeq \set{(u, f(u)) \,:\, u \in V(G)}
	$
	is an $\Cov{H}$-coloring of $G$; and conversely, given an $\Cov{H}$\=/coloring $I$ of $G$, we have $|I \cap L'(u)| = 1$ for all $u \in V(G)$, so an $L$-coloring $f_I$ can be defined by the property
	$
		(u, f_I(u)) \in I \cap L'(u)
	$.
	
	\section{Proof of Theorem~\ref{theo:main}}\label{sec:proof}
	
	\subsection{Probabilistic tools}
	
	We use the following ``lopsided'' version of the Symmetric Lov\'{a}sz Local Lemma (the~LLL for short); see, e.g., \cite[p.~65]{AS00}:
	
	\begin{lemma}[{\textbf{Lov\'asz Local Lemma}}]\label{lemma:LLL}
		Let $I$ be a finite set. For each $i \in I$, let $B_i$ be a random event. Suppose that for every $i \in I$, there is a set $\Gamma(i) \subseteq I$ such that $|\Gamma(i)| \leq d$ and for all $Z \subseteq I \setminus \Gamma(i)$,
		\[
			\Pr\left[B_i \middle\vert \bigcap_{j \in Z} \overline{B_j}\right] \leq p.
		\] 
		\ep{A horizontal line over an event denotes its negation.} If $4pd \leq 1$, then $\Pr\left[\bigcap_{i \in I} \overline{B_i}\right] > 0$.
	\end{lemma}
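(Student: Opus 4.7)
My plan is to follow the classical induction proof of the Lov\'asz Local Lemma, adapted to the lopsided hypothesis. The heart of the argument will be to establish, by induction on $|S|$, the sharper pointwise bound
\[
\Pr\!\left[B_i \,\middle|\, \bigcap_{j \in S} \overline{B_j}\right] \;\leq\; 2p \qquad \text{for every } i \in I \text{ and every } S \subseteq I \setminus \set{i},
\]
together with the tacit side statement that the conditioning event has positive probability so that the conditional probability is well defined. Granting this bound, I conclude $\Pr[\bigcap_{i \in I} \overline{B_i}] > 0$ by listing $I = \set{i_1, \dots, i_n}$ in some order and telescoping the joint probability via the chain rule, writing it as $\prod_{s=1}^n \Pr[\overline{B_{i_s}} \mid \bigcap_{r < s} \overline{B_{i_r}}] \geq (1 - 2p)^n > 0$; strict positivity uses $4pd \leq 1$ and $d \geq 1$ (so that $2p < 1$).

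For the inductive step I split $S = S_1 \sqcup S_2$, where $S_1 \defeq S \cap \Gamma(i)$ and $S_2 \defeq S \setminus \Gamma(i)$, and exploit the identity
\[
\Pr\!\left[B_i \,\middle|\, \bigcap_{j \in S} \overline{B_j}\right] \;=\; \frac{\Pr\!\left[B_i \cap \bigcap_{j \in S_1} \overline{B_j} \,\middle|\, \bigcap_{k \in S_2} \overline{B_k}\right]}{\Pr\!\left[\bigcap_{j \in S_1} \overline{B_j} \,\middle|\, \bigcap_{k \in S_2} \overline{B_k}\right]}.
\]
The numerator will be bounded by $p$ directly from the lopsided hypothesis applied with $Z = S_2 \subseteq I \setminus \Gamma(i)$. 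The denominator I expand as a telescoping product over $S_1 = \set{j_1, \dots, j_t}$ of conditional probabilities of the form $\Pr[\overline{B_{j_s}} \mid \bigcap_{r < s} \overline{B_{j_r}} \cap \bigcap_{k \in S_2} \overline{B_k}]$; each such factor is in exactly the shape to which the inductive hypothesis applies (the conditioning set has size at most $|S|-1$), so each is at least $1 - 2p$. Bernoulli together with $t \leq |\Gamma(i)| \leq d$ and $4pd \leq 1$ will then give a denominator lower bound of $1 - 2pd \geq 1/2$, yielding a ratio of at most $2p$, which closes the induction.

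The hard part is choosing the inductive claim so that it is self-sustaining: the constant $2$ in the target bound $2p$ must be large enough to absorb the compounding loss coming from the denominator product, and the hypothesis $4pd \leq 1$ is precisely what calibrates this. Once the right statement is identified, no further probabilistic ingredient is needed and everything else is careful bookkeeping.
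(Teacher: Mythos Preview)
Your proof is correct and is precisely the classical inductive argument for the lopsided Lov\'asz Local Lemma. The paper itself does not supply a proof of this lemma; it only states it and refers the reader to \cite[p.~65]{AS00}, so there is nothing in the paper to compare your argument against. The one small caveat you already flag---that $d \geq 1$ is needed to deduce $2p<1$ from $4pd\leq 1$---is harmless in the paper's application (where $d=\Delta^3$), and otherwise your bookkeeping, including the simultaneous induction on the positivity of the conditioning events, is sound.
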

	
	\begin{remk}
		In a more commonly used version of the~LLL, each event $B_i$ is mutually independent from the events $B_j$ with $j \not\in \Gamma(i) \cup \set{i}$. Lemma~\ref{lemma:LLL} has exactly the same proof, but it makes no independence requirements (which will be important for its application in the proof of Lemma~\ref{lemma:main}).
	\end{remk}
	
	We will need a version of Chernoff bounds for negatively correlated random variables, introduced by Panconesi and Srinivasan~\cite{PS}. We say that $\set{0,1}$-valued random variables $X_1$, \ldots, $X_n$ are \emph{negatively correlated} if for all $S \subseteq \set{1, \ldots, n}$,
	\[
	\Pr\left[X_i = 1 \text{ for all } i \in S \right] \leq \prod_{i \in S} \Pr\left[X_i = 1\right].
	\]
	
	\begin{lemma}[{\textbf{Chernoff bounds}; see \cite{PS} and \cite[Lemma~3]{Mol17}}]\label{lemma:Chernoff}
		Let $X_1$, \ldots, $X_n$ be $\set{0,1}$-valued random variables and let $Y_i \defeq 1 - X_i$. Set $X \defeq \sum_{i = 1}^n X_i$. If the variables $Y_1$, \ldots, $Y_n$ are negatively correlated, then
		\[
			\Pr\left[X \leq (1 - \delta) \mathbb{E}[X]\right] \leq \exp\left(-\delta^2 \mathbb{E}[X]/2\right) \text{ for any } 0 < \delta < 1.
		\]
		If the variables $X_1$, \ldots, $X_n$ are negatively correlated, then
		\[
			\Pr\left[X \geq (1 + \delta) \mathbb{E}[X] \right] \leq \exp\left(-\delta\mathbb{E}[X]/3\right) \text{ for any } \delta > 1.
		\]
	\end{lemma}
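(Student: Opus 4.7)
The plan is to follow the classical Chernoff moment-generating-function (MGF) argument, with the key observation that negative correlation plays exactly the role of independence in the relevant product expansion.

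For the upper tail, I would exploit the exact identity $e^{tX_i} = 1 + (e^t-1)X_i$ valid whenever $X_i \in \set{0,1}$. Expanding the product,
\begin{equation*}
	\mathbb{E}\!\left[e^{tX}\right] \,=\, \mathbb{E}\!\left[\prod_{i=1}^{n}\!\left(1 + (e^t-1)X_i\right)\right] \,=\, \sum_{S \subseteq \set{1,\ldots,n}} (e^t-1)^{|S|}\, \mathbb{E}\!\left[\prod_{i \in S} X_i\right].
\end{equation*}
For $t > 0$ every coefficient is nonnegative, and since $\prod_{i \in S} X_i$ is the indicator of the event $\set{X_i = 1 \text{ for all } i \in S}$, the negative correlation hypothesis lets me bound each expectation by $\prod_{i \in S}\mathbb{E}[X_i]$ termwise. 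Reassembling and applying $1+x \leq e^x$ gives the classical estimate $\mathbb{E}[e^{tX}] \leq \exp((e^t-1)\mathbb{E}[X])$. Markov's inequality followed by the usual optimization $t = \ln(1+\delta)$ yields $\Pr[X \geq (1+\delta)\mathbb{E}[X]] \leq \left(e^\delta/(1+\delta)^{1+\delta}\right)^{\mathbb{E}[X]}$, and the elementary check $(1+\delta)\ln(1+\delta) \geq 4\delta/3$ for $\delta \geq 1$ reduces this to the stated $\exp(-\delta\mathbb{E}[X]/3)$.

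For the lower tail I would reduce to an analogous MGF bound for the $Y_i$. Writing $e^{-tX_i} = e^{-t}\, e^{t Y_i}$ (valid because $X_i + Y_i = 1$) and running the same product expansion with the $Y_i$ in place of the $X_i$ — now using the negative correlation of the $Y_i$ — I obtain $\mathbb{E}[e^{-tX}] \leq \prod_i (1 - \mathbb{E}[X_i](1 - e^{-t})) \leq \exp(-(1-e^{-t})\mathbb{E}[X])$. Markov's inequality with the optimal choice $t = \ln(1/(1-\delta))$ then produces $\Pr[X \leq (1-\delta)\mathbb{E}[X]] \leq \exp\!\left(-\mathbb{E}[X]\bigl((1-\delta)\ln(1-\delta)+\delta\bigr)\right)$, and the Taylor-series inequality $(1-\delta)\ln(1-\delta) + \delta \geq \delta^2/2$ for $0 < \delta < 1$ finishes the argument.

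The only genuine content is the product-expansion observation: once one notes that expanding $\prod_i (1 + (e^t-1) X_i)$ in the monomial basis turns negative correlation into a termwise inequality, the rest is textbook Chernoff bookkeeping. I expect the only real obstacle to be matching the constants $1/2$ and $1/3$ exactly — the optimal MGF bound is $(e^\delta/(1+\delta)^{1+\delta})^{\mathbb{E}[X]}$ in one direction and the analogous expression in the other, and one must be careful to invoke the elementary estimates for $\ln(1+x)$ that cleanly cover the full ranges $0 < \delta < 1$ and $\delta > 1$ stated in the lemma.
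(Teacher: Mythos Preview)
The paper does not give its own proof of this lemma; it merely states the result with references to Panconesi--Srinivasan~\cite{PS} and Molloy~\cite{Mol17}. Your proposal is correct and is essentially the standard argument appearing in those references: the product expansion of $\prod_i(1+(e^t-1)X_i)$ turns the negative-correlation hypothesis into a termwise inequality on the MGF, after which the textbook Chernoff optimization and the elementary estimates on $(1\pm\delta)\ln(1\pm\delta)$ recover the stated constants.
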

	
	\subsection{Additional notation}\label{subsec:not}
	
	Let $G$ be a graph and let $\Cov{H} = (L,H)$ be a cover of~$G$. For $U \subseteq V(G)$, let $L(U) \defeq \bigcup_{u \in U} L(u)$. Define $H^\ast$ to be the spanning subgraph of $H$ such that an edge $xy \in E(H)$ belongs to $E(H^\ast)$ if and only if $x$ and $y$ are in different parts of the partition $\set{L(u)\,:\, u \in V(G)}$. For clarity, and to emphasize the dependence on $L$, we write $\deg^\ast_{\Cov{H}}(x)$ instead of $\deg_{H^\ast}(x)$. The \emph{domain} of an independent set $I$ in $H$ is $\dom(I) \defeq \set{u \in V(G)\,:\, I\cap L(u) \neq \0}$.  Let $G_I \defeq G - \dom(I)$ and let $\Cov{H}_I = (L_I, H_I)$ denote the cover of $G_I$ defined by
	\[
	H_I \defeq H - N_H[I] \qquad\text{and}\qquad L_I(u) \defeq L(u) \setminus N_H(I) \text{ for all } u \in V(G_I).
	\]
	By definition, if $I'$ is an $\Cov{H}_I$-coloring of $G_I$, then $I \cup I'$ is an $\Cov{H}$-coloring of $G$.
	
	We use bold type to emphasize that a certain value (such as a set $\mathbf{I}$) is a random variable.
	
	\subsection{The proof}
	
	We will reduce Theorem~\ref{theo:main} to the following result from \cite{Hax01},  which is a special case of a more general theorem of Aharoni and Haxell \ep{unpublished}: %, see~\cite{Hax01}:
	
	\begin{theo}[{Aharoni--Haxell;~\cite[Theorem 2]{Hax01}}]\label{theo:Hax}
		Let $\ell \geq 1$ and let $F$ be a graph with $\Delta(F) \leq \ell/2$. Suppose that $V(F) = V_1 \cup \ldots \cup V_n$ is a partition of the vertex set of $F$ such that $|V_i| \geq \ell$ for all $1 \leq i \leq n$. Then $F$ has an independent set $I$ with $|I \cap V_i|=1$ for all $1 \leq i \leq n$. 
	\end{theo}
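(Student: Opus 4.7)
The plan is to argue by contradiction via an augmenting ``swap'' construction, in the style of Haxell's original combinatorial proof.

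Suppose for contradiction that $F$ admits no independent transversal meeting every $V_i$. Among all partial independent transversals $I \subseteq V(F)$ with $|I \cap V_i| \leq 1$ for every $i$, choose one maximizing $|I|$. Let $K \defeq \set{i \,:\, I \cap V_i \neq \varnothing}$, write $x_i$ for the unique vertex in $I \cap V_i$ when $i \in K$, and set $J \defeq \set{1,\dots,n} \setminus K$, which is nonempty by assumption; fix any $j_0 \in J$. Maximality of $I$ immediately implies that every $v \in V_{j_0}$ has at least one neighbor in $I$, since otherwise $I \cup \set{v}$ would be a larger partial transversal.

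Call $(A, B)$ an \emph{augmenting pair} if $A \subseteq I$, $B \subseteq V(F) \setminus I$ is independent in $F$, $|B| = |A| + 1$, $B$ contains exactly one vertex of $V_{j_0}$ and exactly one vertex of $V_i$ for each $x_i \in A$, and $N_F(B) \cap I \subseteq A$. The key observation is that if any augmenting pair actually satisfies $N_F(B) \cap I = A$, then $(I \setminus A) \cup B$ is a partial independent transversal covering $K \cup \set{j_0}$, contradicting the maximality of $|I|$. The goal of the proof therefore reduces to producing such an ``exact'' augmenting pair.

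The natural strategy is iterated enlargement: start with $A = \varnothing$, $B = \set{v_0}$ for any $v_0 \in V_{j_0}$; whenever $N_F(B) \cap I \not\subseteq A$, pick some $x_{i^*} \in (N_F(B) \cap I) \setminus A$, set $A' \defeq A \cup \set{x_{i^*}}$, and adjoin a carefully chosen $v^* \in V_{i^*}$ to $B$ to form an augmenting pair one size larger. Since $|A| \leq |I| < \infty$, this process must terminate, and at termination we have $N_F(B) \cap I = A$. The main obstacle is selecting $v^*$ while preserving all defining properties: $v^*$ must be non-adjacent to every element of $B$, must be distinct from $x_{i^*}$, and, crucially, the inclusion $N_F(B \cup \set{v^*}) \cap I \subseteq A'$ may fail even after a greedy choice. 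Haxell's idea is to avoid building greedily and instead choose $(A, B)$ to be \emph{minimal}, minimizing $|A|$ among all augmenting pairs: one then performs a double count of forbidden versus available vertices in $V_{i^*}$ using $|V_{i^*}| \geq \ell$ and $\Delta(F) \leq \ell/2$, which either exhibits a valid extension with strictly smaller $|A|$ (contradicting minimality) or directly produces an exact augmenting pair $N_F(B)\cap I = A$. This counting --- in which the factor~$2$ in the bound $\Delta(F) \leq \ell/2$ provides exactly the slack needed --- is the combinatorial heart of the argument and the step I expect to be the most delicate to pin down; the remainder of the proof is bookkeeping to propagate independence of $B$ and the one-per-part condition through each enlargement.
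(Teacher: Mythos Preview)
The paper does not prove Theorem~\ref{theo:Hax}; it is quoted from~\cite{Hax01} and used as a black box to derive Lemma~\ref{lemma:small_deg}. The only related argument the paper supplies is in the appendix, where a \emph{weaker} version of Lemma~\ref{lemma:small_deg} (with the hypothesis $\deg_{\Cov{H}}^\ast(x) \leq \ell/8$ in place of $\ell/2$) is proved by a short probabilistic argument: pick one vertex uniformly from each list and apply the Lov\'asz Local Lemma to the events $\set{\set{x,y}\subseteq \mathbf{I}}$ for $xy\in E(H^\ast)$. That route is entirely different from yours and does not recover the sharp constant $1/2$; for the purposes of this paper the weaker bound suffices.

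Your plan follows Haxell's combinatorial approach, which is the right idea for the full statement, but the definitions as written are internally inconsistent. You define an augmenting pair $(A,B)$ to satisfy $N_F(B)\cap I \subseteq A$, and then single out the case $N_F(B)\cap I = A$ as the one yielding the swap. In fact the inclusion $N_F(B)\cap I \subseteq A$ already suffices for $(I\setminus A)\cup B$ to be an independent transversal of $K\cup\set{j_0}$, so the existence of \emph{any} augmenting pair in your sense immediately ends the proof. Correspondingly, your seed $(\varnothing,\set{v_0})$ is \emph{not} an augmenting pair (every $v_0\in V_{j_0}$ has a neighbor in $I$ by maximality), and the clause ``whenever $N_F(B)\cap I \not\subseteq A$, enlarge'' can never apply to an augmenting pair by definition. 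What Haxell's argument actually builds is a rooted tree of partial replacements in which the containment $N_F(B)\cap I \subseteq A$ is \emph{not} maintained at each stage; the double count using $|V_i|\geq \ell$ and $\Delta(F)\leq \ell/2$ shows the tree can always be grown, forcing it to be infinite --- a contradiction. Your sketch has the right ingredients, but the control structure needs to be reorganized along these lines before the counting step can be made precise.
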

	
	We will use Theorem~\ref{theo:Hax} in the form of the following corollary:
	
	\begin{lemma}\label{lemma:small_deg}
		Let $\Cov{H} = (L, H)$ be a cover of a graph $G$. If there is a positive integer~$\ell$ such that $|L(u)| \geq \ell$ for all $u \in V(G)$ and $\deg_{\Cov{H}}^\ast(x) \leq \ell/2$ for all $x \in V(H)$, then $G$ is $\Cov{H}$\=/colorable. 
	\end{lemma}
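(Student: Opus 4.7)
The plan is to reduce Lemma~\ref{lemma:small_deg} directly to the Aharoni--Haxell theorem (Theorem~\ref{theo:Hax}). The natural choice of auxiliary graph is $F \defeq H^\ast$, the spanning subgraph of $H$ whose edges are exactly those joining distinct parts of the partition $\{L(u) : u \in V(G)\}$. The partition $V(F) = \bigcup_{u \in V(G)} L(u)$ then plays the role of $V_1 \cup \dots \cup V_n$ in Theorem~\ref{theo:Hax}.

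First I would verify the two hypotheses of Theorem~\ref{theo:Hax} for this $F$ and this partition. The size condition $|L(u)| \geq \ell$ is exactly the assumption of the lemma. The maximum degree condition is also immediate: by definition $\Delta(F) = \max_{x \in V(H)} \deg_{H^\ast}(x) = \max_{x \in V(H)} \deg^\ast_{\Cov{H}}(x) \leq \ell/2$, again by assumption. Applying Theorem~\ref{theo:Hax}, I would extract an independent set $I$ of $F = H^\ast$ satisfying $|I \cap L(u)| = 1$ for every $u \in V(G)$.

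The only remaining subtlety is to promote $I$ from an independent set in $H^\ast$ to an independent set in $H$ itself; in view of Remark~\ref{remk:single}, this is precisely what is needed to conclude that $I$ is an $\Cov{H}$-coloring of $G$. The edges of $H$ not present in $H^\ast$ are exactly those living inside a single part $L(u)$, where $H[L(u)]$ is complete by~(C2). But since $|I \cap L(u)| = 1$ for each $u$, no two vertices of $I$ lie in a common part, so none of these intra-part edges is spanned by $I$. Hence $I$ is independent in $H$, which finishes the proof.

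There is really no main obstacle here: the lemma is essentially a translation of Theorem~\ref{theo:Hax} into the DP-coloring language, and the argument is a one-paragraph unpacking of definitions. The only thing to be careful about is that $H^\ast$ (not $H$) is the correct auxiliary graph, since otherwise the intra-part complete graphs $H[L(u)]$ would inflate $\Delta(F)$ by up to $\ell - 1$ and destroy the degree bound.
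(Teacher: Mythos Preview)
Your proposal is correct and matches the paper's own derivation essentially verbatim: the paper also applies Theorem~\ref{theo:Hax} with $H^\ast$ in place of $F$ and the partition $\{L(u):u\in V(G)\}$ in place of $\{V_1,\ldots,V_n\}$, and your remark about promoting independence from $H^\ast$ to $H$ is the only detail left implicit there.
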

	
	To derive Lemma~\ref{lemma:small_deg}, one simply has to apply Theorem~\ref{theo:Hax} with $H^\ast$ in place of $F$ and the partition $\set{L(u) \,:\, u \in V(G)}$ in place of $\set{V_1, \ldots, V_n}$. For completeness, we give a short self-contained proof of Lemma~\ref{lemma:small_deg} under the stronger assumption that $\deg_{\Cov{H}}^\ast(x) \leq \ell/8$ for all $x \in V(H)$ in the appendix (this weaker version of Lemma~\ref{lemma:small_deg} is also sufficient for our purposes; see~\cite[Theorem~2]{ReedList} and \cite[Lemma~5]{Mol17} for two of its incarnations in the list coloring setting).
	
	%\subsection{The main lemmas}
	
	\begin{assum}
		For the rest of the proof, fix $0<\epsilon<1$, a triangle-free graph $G$ of sufficiently large maximum degree $\Delta$, and a $k$-fold cover $\Cov{H} = (L, H)$ of $G$ with $k = (1+\epsilon) \Delta/\ln\Delta$. Set $\ell \defeq \Delta^{\epsilon/2}$.
	\end{assum}
	
	In view of Lemma~\ref{lemma:small_deg}, it suffices to establish the following:
	
	\begin{lemma}\label{lemma:main}
		\begin{samepage}The graph $H$ contains an independent set $I$ such that:
		\begin{enumerate}[label=\normalfont{(\roman*)}]
			\item $|L_I(u)| \geq \ell$ for all $u \in V(G_I)$; and
			\item $\deg_{\Cov{H}_I}^\ast(x) \leq \ell/2$ for all $x \in V(H_I)$.
		\end{enumerate}
		\end{samepage}
	\end{lemma}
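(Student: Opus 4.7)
The plan is to apply the lopsided Lov\'asz Local Lemma (Lemma~\ref{lemma:LLL}) to a uniformly random partial proper $\Cov{H}$-coloring of $G$. Concretely, let $\mathbf{I}$ be sampled uniformly at random from the collection of all independent sets $I \subseteq V(H)$ satisfying $|I \cap L(u)| \leq 1$ for every $u \in V(G)$. For each $u \in V(G)$ and each $x \in V(H)$, introduce the bad events
\[
A_u \defeq \event{u \notin \dom(\mathbf{I}) \text{ and } |L_\mathbf{I}(u)| < \ell} \quad \text{and} \quad B_x \defeq \event{x \notin N_H[\mathbf{I}] \text{ and } \deg^\ast_{\Cov{H}_\mathbf{I}}(x) > \ell/2}.
\]
A realization of $\mathbf{I}$ avoiding every $A_u$ and $B_x$ is precisely one that meets conditions~\textup{(i)} and~\textup{(ii)}.

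Each event $A_u$ is determined by $\mathbf{I} \cap L(N_G[u])$, and each $B_x$ by $\mathbf{I} \cap L(N_G[N_G[u_x]])$, where $u_x$ is the unique vertex with $x \in L(u_x)$. Consequently, the natural dependency neighborhood of each event consists of events indexed by vertices and colors at bounded graph distance from $u$ (resp.\ $x$), and hence contains at most $O(\Delta^{C})$ elements for some absolute constant $C$.

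The technical heart of the proof is to show that, for any conjunction $\mathcal{E}$ of complements of events outside the dependency neighborhood, the conditional probabilities $\Pr[A_u \mid \mathcal{E}]$ and $\Pr[B_x \mid \mathcal{E}]$ are bounded by a common $p$ of size $\exp\bigl(-\Delta^{\Omega(\epsilon)}\bigr)$. The unconditional estimate comes from writing $|L_\mathbf{I}(u)|$ (resp.\ $\deg^\ast_{\Cov{H}_\mathbf{I}}(x)$) as a sum of $\set{0,1}$-indicators, computing its expectation via a coupon\=/collector argument in the spirit of \cite[Lemma~7]{Mol17}, and applying Lemma~\ref{lemma:Chernoff} to convert the expectation bound into exponential concentration. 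The triangle\=/free hypothesis plays its usual role here, guaranteeing that the ``killers'' of the colors of $L(u)$ are drawn from essentially disjoint and independent parts $L(v)$ with $v \in N_G(u)$; with $k = (1+\epsilon)\Delta/\ln\Delta$ one obtains $\mathbb{E}[|L_\mathbf{I}(u)|] \geq \ell \cdot \Delta^{\Omega(\epsilon)}$, comfortably above $\ell$, and a symmetric computation bounds $\mathbb{E}[\deg^\ast_{\Cov{H}_\mathbf{I}}(x)]$ far below $\ell/2$.

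The main obstacle is transferring these unconditional estimates to the conditional setting demanded by Lemma~\ref{lemma:LLL}. This is where the uniformity of $\mathbf{I}$ is decisive: conditioning on $\mathcal{E}$ restricts $\mathbf{I}$ to a sub\=/family of partial proper colorings on which the law remains uniform, and a local comparison argument shows that such conditioning can only raise the expected number of surviving colors (resp.\ decrease the expected surviving degree) near $u$, so the concentration bound carries over. Once these conditional bounds are secured, the final step is routine: with $d = \Delta^{O(1)}$ and $p$ super\=/polynomially small, we have $4pd \leq 1$, so Lemma~\ref{lemma:LLL} delivers the desired independent set $I$.
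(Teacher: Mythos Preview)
Your overall architecture matches the paper's --- sample a uniformly random independent set $\mathbf{I}$ in $H$, define bad events at each vertex, and invoke the lopsided LLL --- but there is a genuine gap at the step you yourself flag as ``the main obstacle.'' You claim that ``a local comparison argument shows that such conditioning can only raise the expected number of surviving colors (resp.\ decrease the expected surviving degree) near $u$.'' No such monotonicity is available: the event $\mathcal{E}$ biases $\mathbf{I}$ in a complicated global way, and even if a correlation inequality for the \emph{expectations} could be proved, it would say nothing about \emph{concentration} under the conditional law, so Lemma~\ref{lemma:Chernoff} would no longer apply. The paper's resolution bypasses any comparison argument: instead of conditioning on $\mathcal{E}$, condition on the \emph{exact value} $\mathbf{I} \cap L(\overline{N_G(u)}) = J$. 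This is a refinement of $\mathcal{E}$ (every bad event indexed by a vertex outside $N_G^3[u]$ is determined by $\mathbf{I} \cap L(\overline{N_G(u)})$), and the elementary observation labelled~\eqref{eq:star} then says that $\mathbf{I}' \defeq \mathbf{I} \cap L(N_G(u))$ is again uniformly distributed, now over the independent subsets of $L_J(N_G(u))$. One is placed exactly in the setting of Lemma~\ref{lemma:conc}, which gives the required bound $\leq \Delta^{-3}/4$ uniformly in $J$. In short, the missing idea is to condition on \emph{all} of $\mathbf{I}$ outside $N_G(u)$, not merely on the non-occurrence of the distant bad events.

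A secondary issue: the assertion that ``a symmetric computation bounds $\mathbb{E}[\deg^\ast_{\Cov{H}_\mathbf{I}}(x)]$ far below $\ell/2$'' is not correct as stated. In the proof of Lemma~\ref{lemma:conc}\ref{item:b} this bound is obtained only after a case split: if $\Pr[x \in L_\mathbf{I}(u)] \leq \Delta^{-4}$ one is already done, and it is only under the complementary assumption that the expected residual degree of $x$ drops to $O(\ln\Delta)$.
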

	
	To prove Lemma~\ref{lemma:main}, we need a variant of \cite[Lemma~7]{Mol17}:
	
	\begin{lemma}\label{lemma:conc}
		Fix a vertex $u \in V(G)$ and an independent set $J\subseteq L(\overline{N_G[u]})$. Let $\mathbf{I}'$ be a uniformly random independent subset of $L_J(N_G(u))$ and let $\mathbf{I} \defeq J \cup \mathbf{I}'$. Then:
		\begin{enumerate}[label=\normalfont{(\emph{\alph*})}]
			\item\label{item:a} $\Pr\left[|L_\mathbf{I}(u)| < \ell\right] \leq \Delta^{-3}/8$; and
			\item\label{item:b} $\Pr\left[\text{there is } x \in L_\mathbf{I}(u) \text{ with } \deg^\ast_{\Cov{H}_{\mathbf{I}}}(x) > \ell/2\right] \leq \Delta^{-3}/8$.
		\end{enumerate}
	\end{lemma}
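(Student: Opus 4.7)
The plan is to exploit triangle-freeness of $G$ to reduce the random sampling of $\mathbf{I}'$ to a product of independent choices over the vertices $v \in N_G(u)$, and then to derive \ref{item:a} and \ref{item:b} by combining the first-moment method with Lemma~\ref{lemma:Chernoff}. Since $G$ is triangle-free, any two distinct $v, v' \in N_G(u)$ satisfy $vv' \notin E(G)$, so by Definition~\ref{defn:cover}\ref{item:matching} there are no $H$-edges between $L(v)$ and $L(v')$; combined with the fact that each $H[L(v)]$ is a clique, this makes $H[L_J(N_G(u))]$ a disjoint union of the cliques $H[L_J(v)]$ for $v \in N_G(u)$. Sampling $\mathbf{I}'$ uniformly therefore amounts to independently drawing, for each $v \in N_G(u)$, an element $\omega_v$ uniformly from $L_J(v) \cup \set{\star}$, where $\star$ means ``nothing chosen at $v$.'' For $c \in L(u)$ and $v \in N_G(u)$, let $s_v^c$ denote the unique $H^\ast$-neighbor of $c$ in $L(v)$ when it exists, and set $T_c \defeq \set{v \in N_G(u) : s_v^c \in L_J(v)}$. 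Using triangle-freeness once more to note that $N_G(u) \cap N_G(v) = \0$ for any $v \in N_G(u)$, one verifies that $c \in L_\mathbf{I}(u)$ iff $\omega_v \neq s_v^c$ for every $v \in T_c$, and that $\deg_{\Cov{H}_\mathbf{I}}^\ast(x) = |\set{v \in T_x : \omega_v = \star}|$ for every $x \in L(u)$.

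For \ref{item:a}, set $p_c \defeq \Pr[c \in L_\mathbf{I}(u)] = \prod_{v \in T_c}(1 - 1/(|L_J(v)| + 1))$. The bounds $-\ln(1 - 1/(a+1)) \leq 1/a$ and $|\set{c \in L(u) : v \in T_c}| \leq |L_J(v)|$ (the latter by injectivity of the matching $E_H(L(u), L(v))$) together yield $\sum_c -\ln p_c \leq \deg_G(u) \leq \Delta$, so Jensen's inequality gives
\[
\mathbb{E}[|L_\mathbf{I}(u)|] \,=\, \sum_{c \in L(u)} p_c \,\geq\, k e^{-\Delta/k} \,=\, (1+\epsilon)\,\Delta^{\epsilon/(1+\epsilon)}/\ln \Delta,
\]
which exceeds $2\ell$ once $\Delta$ is large enough (since $\epsilon/(1+\epsilon) > \epsilon/2$ for $0 < \epsilon < 1$). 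Writing $Y_c \defeq 1 - \mathbf{1}[c \in L_\mathbf{I}(u)]$, the event $\set{Y_c = 1}$ equals $\bigcup_{v \in T_c}\set{\omega_v = s_v^c}$; since the $\omega_v$ are independent and the events $\set{\omega_v = s_v^c}$ for different $c$ sharing a common $v$ are pairwise disjoint, a direct computation (of $\mathbb{E}[\prod_{c \in S}(1 - X_c)]$ coordinate by coordinate) shows that the $Y_c$'s are negatively correlated. Lemma~\ref{lemma:Chernoff} with $\delta = 1/2$ then gives $\Pr[|L_\mathbf{I}(u)| < \ell] \leq \exp(-\ell/4) \leq \Delta^{-3}/8$ for $\Delta$ sufficiently large.

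For \ref{item:b}, fix $x \in L(u)$ and set $\mu_x \defeq \sum_{v \in T_x} 1/|L_J(v)|$. Conditional on $x \in L_\mathbf{I}(u)$, the outcomes $\omega_v$ for $v \in T_x$ are independent, each uniform on $(L_J(v) \cup \set{\star}) \setminus \set{s_v^x}$; hence $\mathbf{1}[\omega_v = \star]$ is Bernoulli with parameter $1/|L_J(v)|$, and $\deg_{\Cov{H}_\mathbf{I}}^\ast(x)$ is conditionally a sum of independent such indicators with mean $\mu_x$. If $\mu_x \geq \ell/4$, then using $|L_J(v)|+1 \leq 2|L_J(v)|$,
\[
\Pr[x \in L_\mathbf{I}(u)] \,\leq\, \exp\Bigl(-\sum_{v \in T_x} \tfrac{1}{|L_J(v)|+1}\Bigr) \,\leq\, e^{-\mu_x/2} \,\leq\, e^{-\ell/8}.
\]
If $\mu_x < \ell/4$, then Lemma~\ref{lemma:Chernoff} applied to the conditional sum yields $\Pr[\deg_{\Cov{H}_\mathbf{I}}^\ast(x) > \ell/2 \mid x \in L_\mathbf{I}(u)] \leq e^{-\ell/12}$. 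In either case $\Pr[x \in L_\mathbf{I}(u) \text{ and } \deg_{\Cov{H}_\mathbf{I}}^\ast(x) > \ell/2] \leq e^{-\ell/12}$, and a union bound over the $k \leq \Delta$ colors $x$ produces the claimed $\Delta^{-3}/8$ bound.

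The main obstacle is the structural reduction in the opening paragraph---namely, showing that $H[L_J(N_G(u))]$ is a disjoint union of cliques and that $N_H[\mathbf{I}]$ acts on each candidate $H^\ast$-neighbor $s_v^x$ of $x$ only through $\omega_v$ itself (no indirect blocking from some other $v' \in N_G(u)$), both of which rest crucially on triangle-freeness. Establishing negative correlation of the $Y_c$'s is a secondary technicality that follows from the product-with-disjoint-fibers structure of the sampling; once it and the structural reduction are in place, the probability estimates are routine.
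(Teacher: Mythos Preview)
Your proof is correct and follows essentially the same route as the paper's: the same product decomposition of $\mathbf{I}'$ via triangle-freeness, the same Jensen/convexity lower bound on $\mathbb{E}[|L_\mathbf{I}(u)|]$, the same negative-correlation Chernoff bound for~\ref{item:a}, and the same two-case split (large vs.\ small expected surviving degree) plus union bound for~\ref{item:b}. The only cosmetic differences are that in~\ref{item:b} the paper splits on the threshold $\sum_{v}1/(|L_J(v)|+1)\le 4\ln\Delta$ and works unconditionally, whereas you split on $\mu_x\lessgtr\ell/4$ and condition on $\{x\in L_\mathbf{I}(u)\}$; both choices lead to the same conclusion.
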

	
	The proof of Lemma~\ref{lemma:conc} is virtually identical to that of \cite[Lemma~7]{Mol17}, so we first show how to derive Lemma~\ref{lemma:main} from Lemma~\ref{lemma:conc} (this is the new ingredient in our version of Molloy's argument).
	
	\begin{proof}[Proof of Lemma~\ref{lemma:main} \ep{assuming Lemma~\ref{lemma:conc}}]
		Choose an independent set $\mathbf{I}$ in $H$ uniformly at random. (Since the domain of $\mathbf{I}$ may be a proper subset of $V(G)$, in the context of list coloring this is equivalent to choosing a uniformly random partial proper coloring.) The following immediate observation plays a key role in the proof:
		\[\label{eq:star}
			\parbox{0.85\textwidth}{
				\emph{Fix $U \subseteq V(G)$ and an independent set $J\subseteq L(\overline{U})$. Then the random variable~$\mathbf{I} \cap L(U)$, conditioned on the event $\event{\mathbf{I} \cap L(\overline{U}) = J}$, is uniformly distributed over the independent subsets of $L_J(U)$.}
			}\tag{$\#$}
		\]
		For each $u \in V(G)$, let $B_u$ denote the event
		\[
			B_{u} \defeq \event{\text{$u \not \in \dom(\mathbf{I})$ and either $|L_I(u)| < \ell$ or there is $x \in L_\mathbf{I}(u)$ with $\deg_{\Cov{H}_\mathbf{I}}^\ast(x) > \ell/2$}}.
		\]
		Clearly, if none of the events $B_u$ happen, then $\mathbf{I}$ satisfies the conclusion of Lemma~\ref{lemma:main}.
		
		For each $u \in V(G)$, set $\Gamma(u) \defeq N_G^3[u]$. Since $|\Gamma(u)| \leq \Delta^3$, to apply the~LLL, it remains to verify that for all~$Z \subseteq \overline{\Gamma(u)}$,
		\[
		\Pr\left[B_{u}\middle\vert \bigcap_{v \in Z} \overline{B_{v}}\right] \leq \Delta^{-3}/4.
		\]
		By definition, the outcome of any $B_{v}$ is determined by the set $\mathbf{I} \cap L(N_G^2[v])$. If $v \not \in \Gamma(u)$, then the distance between $u$ and $v$ is at least $4$, so $N_G^2[v] \subset \overline{N_G(u)}$. Therefore, the set $\mathbf{I}\cap L(\overline{N_G(u)})$ determines the outcome of every~$B_v$ with $v \not \in \Gamma(u)$. Hence, it suffices to prove that
		\[
		\Pr\left[B_{u}\middle\vert \mathbf{I} \cap L(\overline{N_G(u)}) = J\right] \leq \Delta^{-3}/4  \quad \text{for all independent } J \subseteq L(\overline{N_G(u)}).
		\]
		To that end, fix a vertex $u \in V(G)$ and an independent set $J\subseteq L(\overline{N_G(u)})$. We may assume that $u \not \in \dom(J)$, i.e., $J \subseteq L(\overline{N_G[u]})$ (otherwise the event $B_u$ is incompatible with $\event{\mathbf{I}\cap L(\overline{N_G(u)}) = J}$). Let $\mathbf{I}' \defeq \mathbf{I}\cap L(N_G(u))$. By~\eqref{eq:star}, the variable $\mathbf{I}'$, under the condition $\event{\mathbf{I}\cap L(\overline{N_G(u)}) = J}$, is uniformly distributed over the independent subsets of $L_J(N_G(u))$. Thus, we are in the situation described by Lemma~\ref{lemma:conc}, and so
		\[
			\Pr\left[B_{u}\middle\vert \mathbf{I} \cap L(\overline{N_G(u)}) = J\right] \leq \Delta^{-3}/8 + \Delta^{-3}/8 = \Delta^{-3}/4,
		\]
		as desired.
	\end{proof}
	
	\begin{proof}[Proof of Lemma~\ref{lemma:conc}]
	
	Define
	\[
	p_0 \defeq \Pr\left[|L_{\mathbf{I}}(u)| < \ell\right]\;\;\;\;\; \text{ and }\;\;\;\;\;
	p_1 \defeq \Pr\left[\text{there is } x \in L_\mathbf{I}(u) \text{ with } \deg^\ast_{H_{\mathbf{I}}}(x) > \ell/2\right].
	\]
	Let $\bullet$ be a special symbol distinct from all the elements of $V(H)$. Since $G$ is triangle-free, the set $N_G(u)$ is independent, so $E_H(L(v), L(w)) = \0$ for any two distinct $v$, $w \in N_G(u)$. Hence, $\mathbf{I}'$ can be constructed via the following procedure:
	\begin{samepage}
	\begin{leftbar}
		\noindent For each $v \in N_G(u)$, uniformly at random select an element $x_v$ from $L_J(v) \cup \set{\bullet}$.
		\begin{itemize}[label=--]
			\item If $x_v = \bullet$, then leave $\mathbf{I}'\cap L(v)$ empty;
			\item otherwise, set $\mathbf{I}'\cap L(v) \defeq \set{x_v}$.
		\end{itemize}
	\end{leftbar}
	\end{samepage}
	
	For $x \in L(u)$, let $\tilde{N}(x)$ denote the set of all vertices $v \in N_G(u)$ such that $N_H(x) \cap L_J(v) \neq \0$. Using this notation, we obtain
	\[
	\Pr\left[x \in L_\mathbf{I}(u)\right] = \Pr\left[\mathbf{I}' \cap N_H(x) = \0\right] = \prod_{v \in \tilde{N}(x)}  \left(1 - \frac{1}{ |L_J(v)| + 1}\right).
	\]
	Since $|L_J(v)| \geq 1$ for all $v \in \tilde{N}(x)$ and $\exp(-1/\alpha) \leq 1 - 1/(\alpha+1) \leq \exp(-1/(\alpha+1))$ for all $\alpha > 0$,
	\begin{equation}\label{eq:bounds_on_p}
	\exp\left(-\sum_{v \in \tilde{N}(x)}\frac{1}{|L_J(v)|}\right)\leq \Pr\left[x \in L_\mathbf{I}(u)\right] \leq \exp\left(-\sum_{v \in \tilde{N}(x)}\frac{1}{|L_J(v)|+1}\right).
	\end{equation}
	Now we can conclude
	\[
	\mathbb{E}\left[|L_\mathbf{I}(u)|\right] = \sum_{x \in L(u)} \Pr\left[x \in L_\mathbf{I}(u)\right] \geq \sum_{x \in L(u)} \exp\left(-\sum_{v \in \tilde{N}(x)}\frac{1}{|L_J(v)|}\right).
	\]
	Notice that
	\[
	\sum_{x \in L(u)} \sum_{v \in \tilde{N}(x)}\frac{1}{|L_J(v)|} \leq \sum_{\substack{v \in N_G(u)\,:\\ L_J(v) \neq \0}} \, \sum_{y \in L_J(v)} \frac{1}{|L_J(v)|} \leq \deg_G(u) \leq \Delta,
	\]
	so, by the convexity of the exponential function,
	\[
		\sum_{x \in L(u)} \exp\left(-\sum_{v \in \tilde{N}(x)}\frac{1}{|L_J(v)|}\right) \geq k \exp\left(-\frac{\Delta}{k}\right) = \frac{(1+\epsilon)\Delta}{\ln\Delta} \cdot \Delta^{-1/(1+\epsilon)} \geq 2\ell,
	\]
	provided $\Delta$ is large enough. Putting everything together, we obtain $\mathbb{E}\left[|L_\mathbf{I}(u)|\right] \geq 2\ell$. To prove that the random variable $|L_\mathbf{I}(u)|$ is highly concentrated around its expectation, we need the following claim:
	
	\begin{claim*}
		The indicator random variables of the events $\event{x \not\in L_\mathbf{I}(u)}$ for $x \in L(u)$ are negatively correlated.
	\end{claim*}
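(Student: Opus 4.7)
I would begin by unpacking each event $A_x \defeq \set{x \notin L_\mathbf{I}(u)}$ in terms of the independent random variables $x_v$, $v \in N_G(u)$, defined by the procedure above. By the matching property~\ref{item:matching} in Definition~\ref{defn:cover}, every $v \in \tilde N(x)$ determines a unique neighbor $y_v(x) \in L_J(v)$ of $x$ in $H$, and the map $x \mapsto y_v(x)$ is injective for each fixed~$v$. Hence
\[
A_x \;=\; \bigcup_{v \in \tilde N(x)} \set{x_v = y_v(x)},
\]
and for each fixed $v$ the events $\set{x_v = y_v(x)}$ are pairwise disjoint across $x$.

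The desired negative correlation amounts to the inequality $\Pr[\bigcap_{x \in S} A_x] \leq \prod_{x \in S} \Pr[A_x]$ for every $S \subseteq L(u)$, which I would prove by induction on $|S|$. The case $|S|=1$ is trivial. For the induction step, fix any $x_0 \in S$, set $S' \defeq S \setminus \set{x_0}$, and decompose
\[
\Pr\!\Big[\bigcap_{x \in S} A_x\Big] \;=\; \Pr\!\Big[\bigcap_{x \in S'} A_x\Big] \;-\; \Pr[\overline{A_{x_0}}] \cdot \Pr\!\Big[\bigcap_{x \in S'} A_x \,\Big|\, \overline{A_{x_0}}\Big].
\]
Using the inductive hypothesis on the first term, the whole bound reduces to establishing the positive correlation
\[
\Pr\!\Big[\bigcap_{x \in S'} A_x \,\Big|\, \overline{A_{x_0}}\Big] \;\geq\; \Pr\!\Big[\bigcap_{x \in S'} A_x\Big];
\]
combining this with the inductive hypothesis once more then gives $\Pr[\bigcap_{x \in S} A_x] \leq \Pr[A_{x_0}] \prod_{x \in S'} \Pr[A_x] = \prod_{x \in S} \Pr[A_x]$, closing the induction.

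The heart of the argument, which I expect to be the main obstacle, is this last positive correlation. I would establish it via an explicit coupling. Under the conditional distribution, each $x_v$ with $v \in \tilde N(x_0)$ is uniform on $(L_J(v) \cup \set{\bullet}) \setminus \set{y_v(x_0)}$, while the other coordinates are unchanged; couple the unconditional and conditional samples by keeping $x_v$ whenever $v \notin \tilde N(x_0)$ or $x_v \neq y_v(x_0)$, and otherwise re-sampling $x_v$ uniformly from $(L_J(v) \cup \set{\bullet}) \setminus \set{y_v(x_0)}$. A direct computation verifies that the new variables have the correct joint conditional distribution, because the conditioning event factorizes over $v$ and the coordinates are independent. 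Crucially, the injectivity of each $y_v$ forces $y_v(x) \neq y_v(x_0)$ whenever $x \in S'$ and $v \in \tilde N(x) \cap \tilde N(x_0)$; consequently, if some $v$ witnesses an unconditional $A_x$ for $x \in S'$ (i.e.\ $x_v = y_v(x)$), then $x_v \neq y_v(x_0)$, so the coupling leaves $x_v$ intact and $v$ still witnesses $A_x$ in the conditional sample. Thus the event $\bigcap_{x \in S'} A_x$ is monotonically preserved under the coupling, yielding the required positive correlation and completing the proof.
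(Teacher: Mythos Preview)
Your proof is correct and follows essentially the same route as the paper. Both arguments reduce to the inequality
\[
\Pr\!\Big[\bigcap_{x\in S'} A_x \,\Big|\, \overline{A_{x_0}}\Big] \;\geq\; \Pr\!\Big[\bigcap_{x\in S'} A_x\Big],
\]
which by Bayes' rule is precisely the paper's conditional inequality $\Pr[A_{x_0}\mid \bigcap_{x\in S'} A_x]\leq \Pr[A_{x_0}]$; and both derive it from the matching property~\ref{item:matching}, which you phrase as injectivity of $y_v$ and the paper phrases as disjointness of $N_{H^\ast}(x_0)$ and $N_{H^\ast}(S')$. The only differences are presentational: the paper uses the chain rule to reduce negative correlation to this conditional inequality directly (rather than your induction/decomposition), and then asserts the inequality in one line from disjointness, whereas you spell out the explicit coupling that makes that one line rigorous.
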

	\begin{claimproof}[Proof of Claim]
		It is enough to argue that for all $x \in L(u)$ and $Y \subseteq L(u) \setminus \set{x}$, we have
		\begin{equation}\label{eq:cond}
			\Pr\left[x \not\in L_\mathbf{I}(u) \, \middle\vert\, Y \cap L_\mathbf{I}(u) = \0\right] \leq \Pr\left[x \not\in L_\mathbf{I}(u)\right].
		\end{equation}
		To that end, let $x \in L(u)$ and $Y \subseteq L(u) \setminus \set{x}$. Inequality \eqref{eq:cond} is equivalent to
		\[
			\Pr\left[Y \cap L_\mathbf{I}(u) = \0 \, \middle\vert\, x \in L_\mathbf{I}(u) \right] \geq \Pr\left[Y \cap L_\mathbf{I}(u) = \0\right].
		\]
		This can in turn be rewritten as
		\[
			\Pr\left[\mathbf{I}' \cap N_{H^\ast}(y) \neq \0 \text{ for all } y \in Y \, \middle\vert\, \mathbf{I}' \cap N_{H^\ast}(x) = \0 \right] \geq \Pr\left[\mathbf{I}' \cap N_{H^\ast}(y) \neq \0 \text{ for all } y \in Y\right],
		\]
		which holds since the sets $N_{H^\ast}(x)$ and $N_{H^\ast}(Y)$ are disjoint.
	\end{claimproof}
	
	\noindent Now we can apply Lemma~\ref{lemma:Chernoff} to conclude that
	\[
		p_0  \leq \Pr\left[|L_\mathbf{I}(u)| < \frac{1}{2}\mathbb{E}\left[|L_\mathbf{I}(u)|\right]\right] \leq \exp\left(-\frac{1}{8}\mathbb{E}\left[|L_\mathbf{I}(u)|\right]\right) \leq \exp\left(-\ell/4\right)  < \Delta^{-3}/8,
	\]
	for large enough $\Delta$. This proves~\ref{item:a}.
	
	To prove~\ref{item:b}, we will show that for all $x \in L(u)$,
	\[
		p_x \defeq \Pr\left[x \in L_{\mathbf{I}}(u) \text{ and } \deg^\ast_{\Cov{H}_{\mathbf{I}}}(x) > \ell/2\right] \leq \Delta^{-4}.
	\]
	This is enough, as $p_1 \leq \sum_{x \in L(u)} p_x$ and $|L(u)| = k < \Delta/8$ for large enough $\Delta$. Let $x \in L(u)$. The second inequality in~\eqref{eq:bounds_on_p} implies
	\[
		p_x \leq \Pr\left[x \in L_{\mathbf{I}}(u)\right] \leq \exp\left(-\sum_{v \in \tilde{N}(x)}\frac{1}{|L_J(v)|+1}\right),
	\]
	so we may assume
	\[
		\exp\left(-\sum_{v \in \tilde{N}(x)}\frac{1}{|L_J(v)|+1}\right) \geq \Delta^{-4}, \qquad\text{i.e.,}\qquad \sum_{v \in \tilde{N}(x)}\frac{1}{|L_J(v)|+1} \leq 4\ln \Delta.
	\]
	Then
	\[
		\mathbb{E}\left[\deg^\ast_{\Cov{H}_{\mathbf{I}}}(x)\right] = \sum_{v \in \tilde{N}(x)} \Pr\left[v \not\in \dom(\mathbf{I}')\right] = \sum_{v \in \tilde{N}(x)} \frac{1}{|L_J(v)|+1} \leq 4\ln\Delta \leq \ell/4,
	\]
	for large enough $\Delta$. The events $\event{v \not\in \dom(\mathbf{I}')}$ for $v \in \tilde{N}(x)$ are mutually independent, so the Chernoff bound for independent $\set{0,1}$-valued random variables yields
	\[
		p_x \leq \Pr\left[\deg^\ast_{\Cov{H}_{\mathbf{I}}}(x) > \ell/2\right] \leq \Pr\left[\deg^\ast_{\Cov{H}_{\mathbf{I}}}(x) > \mathbb{E}\left[\deg^\ast_{\Cov{H}_{\mathbf{I}}}(x)\right] + \ell/4\right] \leq \exp\left(-\ell/12\right) \leq \Delta^{-4},
	\]
	for large enough $\Delta$, as desired.
	\end{proof}

	\section{Proof of Theorem~\ref{theo:Kr}}\label{sec:Kr}

	\noindent The general scheme of the argument is similar to that of the proof of Theorem~\ref{theo:main}.
	
	\begin{assum}
		Fix an integer $r \geq 4$, a $K_r$-free graph $G$ of large maximum degree $\Delta$, and a $k$-fold cover $\Cov{H} = (L, H)$ of $G$ with $k \geq 200r\Delta\log_2\log_2\Delta/\log_2\Delta$. Set $\ell \defeq \Delta^{9/10}$.
	\end{assum}
	
	The role of Lemma~\ref{lemma:main} is played by the following statement:
	 
	\begin{lemma}\label{lemma:Kr_main}
		The graph $H$ contains an independent set $I$ such that
			\[|L_I(u)| \geq \ell \text{ for all } u \in V(G_I) \qquad \text{and} \qquad \Delta(G_I) < \ell.
			\]
	\end{lemma}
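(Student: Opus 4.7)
The plan is to mimic the strategy used for Lemma~\ref{lemma:main}: pick an independent set $\mathbf{I}$ in $H$ uniformly at random, apply the LLL to a family of bad events indexed by the vertices of $G$, and isolate the probabilistic heart of the argument in a Molloy-style local statement (this will be Lemma~\ref{lemma:Kr_bounds}, referred to in the introduction as the analog of Molloy's Lemma~15).

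For each $u \in V(G)$ introduce two bad events
\[
A_u \defeq \event{u \notin \dom(\mathbf{I}) \text{ and } |L_\mathbf{I}(u)| < \ell}, \qquad B_u \defeq \event{u \notin \dom(\mathbf{I}) \text{ and } |N_G(u) \setminus \dom(\mathbf{I})| \geq \ell};
\]
if none of the $A_u$ or $B_u$ occur, then $\mathbf{I}$ witnesses the conclusion of the lemma. Both $A_u$ and $B_u$ are determined by $\mathbf{I} \cap L(N_G[u])$, so setting $\Gamma(u) \defeq N_G^2[u]$ yields a dependency degree of at most $4\Delta^2$. By observation~\eqref{eq:star}, for every independent $J \subseteq L(\overline{N_G[u]})$, the variable $\mathbf{I}' \defeq \mathbf{I} \cap L(N_G(u))$, conditioned on $\event{\mathbf{I} \cap L(\overline{N_G(u)}) = J}$, is uniform over the independent subsets of $L_J(N_G(u))$. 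Exactly as in the proof of Lemma~\ref{lemma:main}, the LLL hypothesis therefore reduces to the following local assertion that Lemma~\ref{lemma:Kr_bounds} must provide: for every such $u$ and $J$, with $\mathbf{I} \defeq J \cup \mathbf{I}'$,
\[
\Pr[|L_\mathbf{I}(u)| < \ell] \leq \Delta^{-3}/8 \qquad \text{and} \qquad \Pr[|N_G(u) \setminus \dom(\mathbf{I})| \geq \ell] \leq \Delta^{-3}/8.
\]
Feeding these bounds into Lemma~\ref{lemma:LLL} with $p = \Delta^{-3}/4$ and $d = 4\Delta^2$ then delivers the required $\mathbf{I}$, since $4pd = 4/\Delta < 1$ for large $\Delta$.

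The main obstacle is proving these local estimates, especially the neighborhood-shrinking bound for $B_u$. In the triangle-free setting, $N_G(u)$ is independent, and this uncouples the selection of $\mathbf{I}'$ into independent uniform draws $x_v \in L_J(v) \cup \set{\bullet}$, which is exactly what made the expectation calculation and the negative-correlation argument in the proof of Lemma~\ref{lemma:conc} go through painlessly. Here $G[N_G(u)]$ is merely $K_{r-1}$-free, and the independent-set requirement on $\mathbf{I}'$ creates strong couplings across different $v \in N_G(u)$. Following Molloy, the plan is to reveal $\mathbf{I}'$ sequentially along a carefully chosen ordering of $N_G(u)$: at each step the conditional distribution of the next choice is governed by the portion of $G[N_G(u)]$ already processed, and the $K_{r-1}$-free hypothesis is exploited inductively to show that the expected number of uncolored vertices remaining in $N_G(u)$ is of order $\Delta \log\log\Delta / \log\Delta \ll \ell$, with a parallel lower bound on $\mathbb{E}[|L_\mathbf{I}(u)|]$; concentration is then extracted via a suitable martingale inequality or negative-correlation estimate. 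Adapting this procedure to the DP-coloring framework is the genuinely delicate step: unlike in the list-coloring case, an $\Cov{H}$-coloring is not a partition of $V(G)$ into color classes, so the matchings $E_H(L(v), L(w))$ must be bookkept explicitly throughout the reveal, which is why a verbatim appeal to Molloy's Lemma~15 is insufficient.
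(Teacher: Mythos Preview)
Your LLL reduction has a genuine gap in the formulation of the second bad event. You ask Lemma~\ref{lemma:Kr_bounds} to deliver
\[
\Pr\bigl[\,|N_G(u)\setminus\dom(\mathbf{I})|\geq\ell\,\bigr]\leq\Delta^{-3}/8
\]
conditionally on $\mathbf{I}\cap L(\overline{N_G(u)})=J$, for \emph{every} independent $J$. This is simply false in general: $J$ may leave $|L_J(v)|$ tiny (even $0$ or $1$) for most $v\in N_G(u)$, and then $\Pr[v\notin\dom(\mathbf{I}')]$ is close to $1$, so $\deg_{G_\mathbf{I}}(u)$ is essentially $\deg_G(u)\gg\ell$ with high conditional probability. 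No martingale or concentration argument will rescue this, because the conditional \emph{expectation} already blows up. Your sketch ``the expected number of uncolored vertices remaining in $N_G(u)$ is of order $\Delta\log\log\Delta/\log\Delta$'' is therefore not attainable uniformly in $J$.

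The paper's fix is to fold the list constraint into the degree event: part~\ref{item:bKr} of Lemma~\ref{lemma:Kr_bounds} bounds
\[
\Pr\bigl[\deg_{G_\mathbf{I}}(u)\geq\ell\ \text{and}\ |L_\mathbf{I}(v)|\geq\ell\ \text{for all}\ v\in N_{G_\mathbf{I}}(u)\bigr].
\]
The added clause $|L_\mathbf{I}(v)|\geq\ell$ is precisely what makes the bound provable (for any fixed $\lceil\ell\rceil$ uncolored neighbours with large surviving lists one can greedily extend in at least $\lceil\ell\rceil!$ ways, which dominates the single ``all blank'' outcome). On the LLL side this clause is harmless: once no $A_v$ occurs, every $v\in V(G_\mathbf{I})$ has $|L_\mathbf{I}(v)|\geq\ell$, so the added clause is automatically satisfied and the degree conclusion follows. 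The price is that the combined bad event now depends on $\mathbf{I}\cap L(N_G^2[u])$, so you must take $\Gamma(u)=N_G^3[u]$ rather than $N_G^2[u]$; this is exactly the setup of the proof of Lemma~\ref{lemma:main}, and the bound $|\Gamma(u)|\leq\Delta^3$ still matches $p=\Delta^{-3}/4$ in the LLL. Finally, the proof of part~\ref{item:aKr} in the paper is not ``expectation plus concentration'' but a layer-by-layer resampling of $\mathbf{I}'$ combined with Shearer's median bound (Lemma~\ref{lemma:She}); your outline for~\ref{item:aKr} would need to be reworked along those lines as well.
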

	
	Note that Lemma~\ref{lemma:Kr_main} readily implies Theorem~\ref{theo:Kr}, since the DP-chromatic number of a graph is always at most one plus its maximum degree. Lemma~\ref{lemma:Kr_main} in turn follows from an analog of~\cite[Lemma~15]{Mol17} for DP-coloring:
	
	\begin{lemma}\label{lemma:Kr_bounds}
		Fix a vertex $u \in V(G)$ and an independent set $J\subseteq L(\overline{N_G[u]})$. Let $\mathbf{I}'$ be a uniformly random independent subset of $L_J(N_G(u))$ and let $\mathbf{I} \defeq J \cup \mathbf{I}'$. Then:
		\begin{enumerate}[label=\normalfont{(\emph{\alph*})}]
			\item\label{item:aKr} $\Pr\left[|L_\mathbf{I}(u)| < \ell\right] \leq \Delta^{-3}/8$; and
			\item\label{item:bKr} $\Pr\left[\deg_{G_\mathbf{I}}(u) \geq \ell \text{ and } |L_\mathbf{I}(v)| \geq \ell \text{ for all } v \in N_{G_\mathbf{I}}(u) \right] \leq \Delta^{-3}/8$.
		\end{enumerate}
	\end{lemma}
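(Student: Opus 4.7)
The plan is to follow the proof of Lemma~\ref{lemma:conc} as a template. Write $F \defeq H[L_J(N_G(u))]$, so that $\mathbf{I}'$ is a uniformly random independent set in $F$. The one new difficulty is that $G[N_G(u)]$ is only $K_{r-1}$-free rather than independent, so the representation of $\mathbf{I}'$ as a product of independent per-vertex choices used in Lemma~\ref{lemma:conc} is no longer available; I plan to handle this by following \cite[Lemma~15]{Mol17}, exploiting the matching structure of $H$ between distinct cliques $L_J(v)$.

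For part~\ref{item:aKr}, the plan is to establish, for each $x \in L(u)$, a lower bound of the form $\Pr[x \in L_\mathbf{I}(u)] \geq \exp\bigl(-C\sum_{v \in \tilde N(x)} 1/|L_J(v)|\bigr)$ for some absolute constant $C$, analogous to the estimate from Lemma~\ref{lemma:conc} but with an extra constant factor in the exponent to absorb the perturbation caused by matching edges inside $F$. Combined with the convexity argument from Lemma~\ref{lemma:conc} and the lower bound $k \geq 200 r \Delta \log_2\log_2\Delta/\log_2\Delta$ (whose leading constant $200r$ gives plenty of room to swallow $C$), this yields $\mathbb{E}[|L_\mathbf{I}(u)|] \geq k \exp(-C\Delta/k) \gg 2\ell$. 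Concentration then follows from Lemma~\ref{lemma:Chernoff} after establishing negative correlation of the indicators $\mathbb{1}[x \notin L_\mathbf{I}(u)]$, which goes through essentially as in Lemma~\ref{lemma:conc} because the matching axiom of Definition~\ref{defn:cover} keeps $N_{H^\ast}(x)$ and $N_{H^\ast}(Y)$ disjoint for any $Y \subseteq L(u) \setminus \set{x}$.

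For part~\ref{item:bKr}, set $X_v \defeq \mathbb{1}[v \in N_{G_\mathbf{I}}(u) \text{ and } |L_\mathbf{I}(v)| \geq \ell]$ for $v \in N_G(u)$, so the event in question is contained in $\event{\sum_v X_v \geq \ell}$. The key conditional estimate is
\[
\Pr\left[X_v = 1 \,\middle\vert\, \mathbf{I}' \setminus L_J(v) = A\right] \,\leq\, \frac{1}{\ell + 1}
\]
for every admissible $A$: conditional on $\mathbf{I}' \setminus L_J(v) = A$, the set $\mathbf{I}' \cap L_J(v)$ is uniformly distributed over $\set{\0} \cup \set{\set{z} \,:\, z \in L_J(v) \setminus N_H(A)}$ (a set of size $|L_J(v) \setminus N_H(A)| + 1$), and $\event{X_v = 1}$ forces $\mathbf{I}' \cap L_J(v) = \0$ together with $|L_J(v) \setminus N_H(A)| = |L_\mathbf{I}(v)| \geq \ell$. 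Integrating gives $\mathbb{E}[\sum_v X_v] \leq \Delta/(\ell+1) \ll \ell$; stochastic domination of $\sum_v X_v$ by $\mathrm{Bin}(\Delta, 1/(\ell+1))$, combined with Lemma~\ref{lemma:Chernoff}, then yields the desired tail bound. The main obstacles I expect are: (i) recovering the lower bound on $\Pr[x \in L_\mathbf{I}(u)]$ in part~\ref{item:aKr}, since the product-formula identity from Lemma~\ref{lemma:conc} fails when $G[N_G(u)]$ has edges; and (ii) formalizing the stochastic-domination step in part~\ref{item:bKr}, as $X_v$ depends on all of $\mathbf{I}'$ and not only on $\mathbf{I}' \cap L_J(v)$, so the naive ``reveal variables one at a time'' coupling requires care. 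A careful case analysis based on whether $vw \in E(G)$ for pairs $v,w \in N_G(u)$, in the spirit of \cite[Lemma~15]{Mol17}, should suffice for both.
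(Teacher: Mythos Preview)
Your plan for part~\ref{item:aKr} has a genuine gap: the pointwise bound $\Pr[x \in L_\mathbf{I}(u)] \geq \exp\bigl(-C\sum_{v \in \tilde N(x)} 1/|L_J(v)|\bigr)$ with an \emph{absolute} constant $C$ is not available, and the paper does not attempt anything of the sort. Once $G[N_G(u)]$ has edges, $\mathbf{I}'$ is no longer a product over the cliques $L_J(v)$, and the matching edges do far more than ``perturb'' the triangle-free computation by a constant factor in the exponent. Indeed, if such a bound held, your convexity argument would yield $\chi_{DP}(G)=O(\Delta/\ln\Delta)$ for $K_r$-free $G$ with the implied constant independent of $r$ and with no $\ln\ln\Delta$ loss---precisely the open problem the paper cites as \cite[Conjecture~3.1]{AKS}. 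Relatedly, your negative-correlation claim does not follow from disjointness of $N_{H^\ast}(x)$ and $N_{H^\ast}(Y)$ alone: the proof in Lemma~\ref{lemma:conc} used that disjointness \emph{together with} the product structure of $\mathbf{I}'$, which is exactly what is lost here. Finally, \cite[Lemma~15]{Mol17} does not proceed by ``case analysis on whether $vw\in E(G)$''; it uses a Shearer-type bound on the median independent-set size in $K_{r-1}$-free graphs (the paper's Lemma~\ref{lemma:She}).

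The paper's argument for~\ref{item:aKr} is structurally different. For each $x\in L(u)$ it passes to the \emph{layer} $\Lambda(x)=N_{H^\ast}(x)\cap L_J(N_G(u))$ and uses the observation (analogous to~\eqref{eq:star}) that, conditioned on $\mathbf{I}'\setminus\Lambda(x)=Q$, the intersection $\mathbf{I}'\cap\Lambda(x)$ is a uniform independent set in the $K_{r-1}$-free graph $F(x,Q)=H[\Lambda(x)\setminus N_H(Q)]$. One then resamples $\mathbf{I}'$ layer by layer: on each layer either $\operatorname{ind}(F(x_i,\cdot))$ is large, so by Lemma~\ref{lemma:She} the resampled piece is typically big (and a counting argument shows too many such layers would force $|\mathbf{I}'|>\Delta$, a contradiction), or $\operatorname{ind}(F(x_i,\cdot))$ is small, so the resampled piece is empty with probability $\geq\Delta^{-1/20}$ and $x_i$ survives in $L_\mathbf{I}(u)$. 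A Chernoff bound on two auxiliary i.i.d.\ coin sequences finishes. The $\ln\ln\Delta$ in $k$ is exactly what Shearer's lemma costs; it is not slack to be absorbed by a constant.

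Your plan for~\ref{item:bKr} is essentially correct and close to the paper's argument. The paper sidesteps the stochastic-domination issue you flag by taking a union bound over $\lceil\ell\rceil$-subsets $\{v_1,\dots,v_{\lceil\ell\rceil}\}\subseteq N_G(u)$: conditioning on $\mathbf{I}'\setminus L(\{v_1,\dots,v_{\lceil\ell\rceil}\})=Q$, if every $|L_J(v_t)\setminus N_H(Q)|\geq\ell$ then a greedy count gives at least $\lceil\ell\rceil!$ extensions, so the conditional probability of all $v_t$ being uncolored with large lists is at most $1/\lceil\ell\rceil!$, and $\binom{\Delta}{\lceil\ell\rceil}/\lceil\ell\rceil!<\Delta^{-3}/8$. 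Your conditional estimate $\Pr[X_v=1\mid \mathbf{I}'\setminus L_J(v)=A]\leq 1/(\ell+1)$ is the one-step version of this same idea.
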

	
	%Lemma~\ref{lemma:Kr_main} is derived from Lemma~\ref{lemma:Kr_bounds} in the same way as Lemma~\ref{lemma:main} is deduced from Lemma~\ref{lemma:conc}, so we do not spell the proof out here.
	The derivation of Lemma~\ref{lemma:Kr_main} from Lemma~\ref{lemma:Kr_bounds} is almost verbatim identical to that of Lemma~\ref{lemma:main} from Lemma~\ref{lemma:conc}, and we do not spell it out here.
	To prove Lemma~\ref{lemma:Kr_bounds}, we need a variant of a result due to Shearer~\cite{She} that was established by Molloy~\cite[Lemma~14]{Mol17}. For a graph $F$, let $\operatorname{ind}(F)$ denote the number of independent sets in $F$ and let $\overline{\alpha}(F)$ denote the \emph{median} size of an independent set in $F$, i.e., the supremum of all $\alpha \geq 0$ such that $F$ contains at least $\operatorname{ind}(F)/2$ independent sets of size at least $\alpha$. For $\lambda > 0$, define
	\[
		f(\lambda) \defeq \frac{\log_2 \lambda}{2r \log_2\log_2 \lambda}.
	\]
	
	\begin{lemma}[{\cite[Lemma~14]{Mol17}}]\label{lemma:She}
		If $F$ is a nonempty $K_r$-free graph, then $\overline{\alpha}(F) \geq f(\operatorname{ind}(F))$.
	\end{lemma}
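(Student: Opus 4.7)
The plan is to prove this by strong induction on $|V(F)|$, following the approach of Shearer and (in this precise ``median'' form) Molloy. Let $\lambda \defeq \operatorname{ind}(F)$. The base cases handle small $F$: when $f(\lambda) \leq 1$ (equivalently, $\lambda$ is not too large relative to $r$), the bound is essentially trivial, since at least half of the independent sets of any nonempty graph have size $\geq 1$ provided $\lambda \geq 3$ (the sets $\0$ and $\set{v}$ for $v$ of ``low weight'' account for at most a bounded fraction).

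For the inductive step I would select a vertex $v \in V(F)$ strategically and partition the independent sets of $F$ according to whether they contain $v$, writing $\operatorname{ind}(F) = \operatorname{ind}(F - v) + \operatorname{ind}(F - N_F[v])$. There are two cases to balance. If $v$ is contained in a reasonable fraction of the independent sets, say $\operatorname{ind}(F - N_F[v]) \geq \lambda/\log_2\lambda$, then by induction (on $|V|$) applied to $F - N_F[v]$ (which is $K_r$-free), a median independent set $I'$ of $F - N_F[v]$ has size at least $f(\operatorname{ind}(F - N_F[v]))$, and adjoining $v$ gives an independent set of $F$ of size $\geq 1 + f(\operatorname{ind}(F - N_F[v]))$; some bookkeeping translates this to the desired median bound. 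If instead $\operatorname{ind}(F - N_F[v]) < \lambda/\log_2\lambda$ for every choice of $v$, one exploits the $K_r$\-/freeness: the subgraph $F[N_F(v)]$ is $K_{r-1}$\-/free, so by induction on $r$ (this is the reason the factor $2r$ appears in the denominator of $f$) it contains many large independent sets, which forces $\operatorname{ind}(F - N_F[v])$ to be large after all --- a contradiction unless the median bound already holds.

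The key quantitative input is the choice of $v$. The right candidate is a vertex minimizing $\operatorname{ind}(F - N_F[v])/\operatorname{ind}(F - v)$ (the conditional ``absence probability''), which corresponds to picking the vertex $v$ that, in a uniformly random independent set $\mathbf{I}$, maximizes $\Pr[v \in \mathbf{I}]$. With this choice, summing $\Pr[v \in \mathbf{I}]$ over the neighborhood of $v$ (which is $K_{r-1}$\=/free) and invoking the $r-1$ case of the lemma on $F[N_F(v)]$ yields the recursion $f(\lambda) \leq f_r(\lambda)$ that the definition of $f$ is precisely calibrated to satisfy.

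The main obstacle will be carrying out this balancing cleanly: the function $f(\lambda) = \log_2\lambda/(2r\log_2\log_2\lambda)$ is chosen so that the recursion on $r$ loses only a factor of $\log_2\log_2\lambda$ per level, but verifying that the two cases above actually combine to preserve the median (as opposed to, e.g., the expectation) requires a careful counting argument about how independent sets of size below $f(\lambda)$ distribute across the partition. Rather than rederiving all of this, I would simply cite Molloy's proof of \cite[Lemma~14]{Mol17}, whose argument is purely graph-theoretic and carries over verbatim to our setting since Lemma~\ref{lemma:She} makes no reference to any cover or coloring structure.
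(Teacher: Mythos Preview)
The paper does not prove this lemma at all: it is stated with a direct citation to \cite[Lemma~14]{Mol17} and immediately followed by the proof of Lemma~\ref{lemma:Kr_bounds}. Your proposal ultimately does the same thing---after sketching the Shearer--Molloy induction, you defer to Molloy's proof---so your approach agrees with the paper's; the preliminary sketch is extra but harmless.
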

	
	\begin{proof}[Proof of Lemma~\ref{lemma:Kr_bounds}]
		We start with the proof of~\ref{item:aKr}. For each $x \in L(u)$, the \emph{layer} of $x$ is the set
		\[
		\Lambda(x) \defeq L_J(N_G(u)) \cap N_H(x).
		\]
		Equivalently, using the notation introduced in \S\ref{subsec:not}, we can write $\Lambda(x) \defeq N_{H^\ast_J}(x)$; i.e., $\Lambda(x)$ contains all the neighbors of $x$ in $H^\ast$ that are not adjacent to any element of $J$.
		Note that the layer of $x$ intersects each list $L(v)$ for $v \in N_G(u)$ in at most one element. Also, for distinct $x$, $y \in L(u)$, the layers $\Lambda(x)$ and $\Lambda(y)$ are disjoint. However, in contrast to the situation in list coloring, $H$ may contain edges between $\Lambda(x)$ and $\Lambda(y)$ that are not covered by the cliques $H[L(v)]$.
		
		Let $x \in L(u)$. For an independent set $Q \subseteq L_J(N_G(u)) \setminus \Lambda(x)$, let $F(x, Q)$ denote the subgraph of $H$ induced by the vertices in $\Lambda(x)$ with no neighbors in $Q$. The following observation is similar to~\eqref{eq:star} from the proof of Lemma~\ref{lemma:main}:
		\[\label{eq:flat}
		\parbox{0.85\textwidth}{
			\emph{Fix $x \in L(u)$ and an independent set $Q \subseteq L_J(N_G(u)) \setminus \Lambda(x)$. Then the random variable $\mathbf{I}' \cap \Lambda(x)$, conditioned on the event $\event{\mathbf{I}' \setminus \Lambda(x) = Q}$, is uniformly distributed over the independent sets in $F(x, Q)$.}
		}\tag{$\flat$}
		\]
		From \eqref{eq:flat}, it follows that $\mathbf{I}'$ can be constructed via the following randomized procedure. Let $x_1$, \ldots, $x_k$ be an arbitrary ordering of the set $L(u)$. For each $1 \leq i \leq k$, let $\Lambda_i \defeq \Lambda(x_i)$.
		
		\begin{leftbar}
			\noindent Let $\mathbf{I}_0$ be a uniformly random independent subset of $L_J(N_G(u))$. Set $\mathbf{s}_0 \defeq 0$ and $\mathbf{t}_0 \defeq 0$.
			
			\noindent Repeat the next steps for each $1 \leq i \leq k$:
			
			\begin{itemize}[label=--]
				\item Let $\mathbf{F}_i \defeq F(x_i, \mathbf{I}_{i-1} \setminus \Lambda_i)$. Define $\mathbf{s}_i$ and $\mathbf{t}_i$ as follows:
				\[
					\begin{array}{c c c l c l c}
					\text{if} & \operatorname{ind}(\mathbf{F}_i) > \Delta^{1/20}, & \text{then} & \mathbf{s}_i \defeq \mathbf{s}_{i-1} + 1 & \text{and} & \mathbf{t}_i \defeq \mathbf{t}_{i-1}, & \text{while}\\
					\text{if} & \operatorname{ind}(\mathbf{F}_i) \leq \Delta^{1/20}, & \text{then} & \mathbf{s}_i \defeq \mathbf{s}_{i-1} & \text{and} & \mathbf{t}_i \defeq \mathbf{t}_{i-1} + 1. &
					\end{array}
				\]
				
				\item Let $\mathbf{S}_i$ be a uniformly random independent set in $\mathbf{F}_i$ and let $\mathbf{I}_i \defeq (\mathbf{I}_{i-1} \setminus \Lambda_i) \cup \mathbf{S}_i$.
			\end{itemize}
			
			\noindent Set $\mathbf{I}' \defeq \mathbf{I}_k$.
		\end{leftbar}
		
		\noindent It is clear from \eqref{eq:flat} that the set $\mathbf{I}'$ constructed by the above procedure is uniformly distributed over the independent subsets of $L_J(N_G(u))$ (see also \cite[Lemma 15, Claim 1]{Mol17}).
		
		Let $\mathbf{a}(1)$, $\mathbf{a}(2)$, \ldots{} and $\mathbf{b}(1)$, $\mathbf{b}(2)$, \ldots{} be two infinite random sequences of zeros and ones drawn independently from each other, such that for all $s$ and $t$, we have
		\[
			\Pr [\mathbf{a}(s) = 1] = 1/2 \qquad \text{and} \qquad \Pr[\mathbf{b}(t) = 1] = \Delta^{-1/20}.
		\]
		Note that if the values $\mathbf{I}_0$, $\mathbf{S}_1$, \ldots, $\mathbf{S}_{i-1}$ are fixed, then the corresponding conditional probability of $\event{|\mathbf{S}_i| \geq \overline{\alpha}(\mathbf{F}_i)}$ is at least $1/2$, while the conditional probability of $\event{\mathbf{S}_i = \0}$ is precisely $1/\operatorname{ind}(\mathbf{F}_i)$ (here we are using the fact that the sets $\mathbf{I}_0$, $\mathbf{S}_1$, \ldots, $\mathbf{S}_{i-1}$ fully determine $\mathbf{F}_i$). Therefore, we can couple the distributions of the sequences $\mathbf{a}(1)$, $\mathbf{a}(2)$, \ldots{} and $\mathbf{b}(1)$, $\mathbf{b}(2)$, \ldots{} with the randomized procedure described above in such a way that 
		\begin{equation}\label{eq:coupling}
			\begin{array}{c c c c c c c}
			\text{if} & \operatorname{ind}(\mathbf{F}_i) > \Delta^{1/20} & \text{and} & \mathbf{a}(\mathbf{s}_i) = 1, & \text{then} & |\mathbf{S}_i| \geq \overline{\alpha}(\mathbf{F}_i), & \text{while} \\
			\text{if} & \operatorname{ind}(\mathbf{F}_i) \leq \Delta^{1/20} & \text{and} & \mathbf{b}(\mathbf{t}_i) = 1, & \text{then} & \mathbf{S}_i = \0. &
			\end{array}
		\end{equation}
		The Chernoff bound for independent random variables implies that, with probability at least $1 - \Delta^{-3}/8$,
		\begin{equation}\label{eq:ab_counts}
			|\set{1 \leq s \leq k/2 \,:\, \mathbf{a}(s) = 1}| \geq k/5 \qquad \text{and} \qquad |\set{1 \leq t \leq k/2 \,:\, \mathbf{b}(t) = 1}| \geq \ell.
		\end{equation}
		We claim that $|L_\mathbf{I}(u)| \geq \ell$ whenever \eqref{eq:ab_counts} holds. Since $\mathbf{s}_k + \mathbf{t}_k = k$, we always have either $\mathbf{s}_k \geq k/2$ or $\mathbf{t}_k \geq k/2$. If $\mathbf{s}_k \geq k/2$, then \eqref{eq:coupling} and the first part of \eqref{eq:ab_counts} imply that there are at least $k/5$ indices $i$ such that $\operatorname{ind}(\mathbf{F}_i) > \Delta^{1/20}$ and $|\mathbf{S}_i| \geq \overline{\alpha}(\mathbf{F}_i)$. By Lemma~\ref{lemma:She}, any such $i$ satisfies
		\[
			|\mathbf{S}_i| \geq \overline{\alpha}(\mathbf{F}_i) \geq f(\operatorname{ind}(\mathbf{F}_i)) \geq f(\Delta^{1/20}) > \frac{\log_2 \Delta}{40r \log_2 \log_2 \Delta},
		\]
		so in this case
		\[
			|\mathbf{I}'| = \sum_{i = 1}^k |\mathbf{S}_i| > \frac{k}{5} \cdot \frac{\log_2 \Delta}{40r \log_2 \log_2 \Delta} \geq \Delta.
		\]
		This is a contradiction, as $|\mathbf{I}'| \leq \deg_G(u) \leq \Delta$. Thus, we must have $\mathbf{t}_k \geq k/2$. From \eqref{eq:coupling} and the second part of~\eqref{eq:ab_counts}, we obtain that there are at least $\ell$ indices $i$ such that $\operatorname{ind}(\mathbf{F}_i) \leq \Delta^{1/20}$ and $\mathbf{S}_i = \0$. But $x_i \in L_{\mathbf{I}}(u)$ for any such $i$, so $|L_\mathbf{I}(u)| \geq \ell$, as desired. This completes the proof of~\ref{item:aKr}.
		
		To prove~\ref{item:bKr}, consider any collection $v_1$, \ldots, $v_{\lceil \ell \rceil}$ of $\lceil \ell \rceil$ distinct elements of $N_G(u)$. We claim that
		\begin{equation}\label{eq:factorial}
			\Pr\left[v_t \not \in \dom(\mathbf{I}') \text{ and } |L_\mathbf{I}(v_t)| \geq \ell \text{ for all } 1 \leq t \leq \lceil \ell \rceil\right] \leq \frac{1}{\lceil \ell \rceil!},
		\end{equation}
		which is enough as ${\Delta \choose \lceil \ell \rceil}/\lceil \ell \rceil! < \Delta^{-3}/8$ for large $\Delta$. To show~\eqref{eq:factorial}, consider an arbitrary independent set $Q \subseteq L_J(N_G(u))$ disjoint from $L(v_t)$ for all $1 \leq t \leq \lceil\ell\rceil$. We either have \[|L_J(v_t) \setminus N_H(Q)| < \ell \text{ for some } t,\] or else, there exist at least $\lceil \ell \rceil!$ ways to greedily choose elements $x_t \in L_J(v_t)$ so that $Q \cup \set{x_1, \ldots, x_{\lceil \ell \rceil}}$ is an independent set. Therefore,
		\[
			\Pr\left[v_t \not \in \dom(\mathbf{I}') \text{ and } |L_\mathbf{I}(v_t)| \geq \ell \text{ for all } 1 \leq t \leq \lceil \ell \rceil \,\middle\vert\, \mathbf{I}'\setminus L(\set{v_1, \ldots, v_{\lceil \ell \rceil}}) = Q\right] \leq \frac{1}{\lceil \ell \rceil!}.
		\]
		Since $Q$ is arbitrary, this yields~\eqref{eq:factorial}.
	\end{proof}

	\appendix
	
	\section*{Appendix: Proof of a weaker version of Lemma~\ref{lemma:small_deg}}
	
	\begin{lemma*}
		Let $\Cov{H} = (L, H)$ be a cover of a graph $G$. If there is a positive integer~$\ell$ such that $|L(u)| \geq \ell$ for all $u \in V(G)$ and $\deg_{\Cov{H}}^\ast(x) \leq \ell/8$ for all $x \in V(H)$, then $G$ is $\Cov{H}$\=/colorable. 
	\end{lemma*}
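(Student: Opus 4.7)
The plan is to apply the symmetric Lov\'{a}sz Local Lemma (Lemma~\ref{lemma:LLL}) to a single random transversal of the lists. First, reduce to the case $|L(u)| = \ell$ for every $u$ by deleting arbitrary vertices from each oversized list: axioms (C1)--(C4) are inherited by induced subcovers, $\deg^\ast_{\Cov{H}}$ can only drop, and any $\Cov{H}$\=/coloring of the shrunken cover is automatically an $\Cov{H}$\=/coloring of the original one.

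Now independently for each $u \in V(G)$, choose $\mathbf{x}_u \in L(u)$ uniformly at random, and let $\mathbf{I} \defeq \set{\mathbf{x}_u \,:\, u \in V(G)}$. By (C1) and Remark~\ref{remk:single}, $\mathbf{I}$ is an $\Cov{H}$\=/coloring of $G$ if and only if no edge $e \in E(H^\ast)$ has both endpoints in $\mathbf{I}$. For each edge $e = xy \in E(H^\ast)$ with $x \in L(u)$ and $y \in L(v)$ (so $u \neq v$), declare the bad event $B_e \defeq \event{\mathbf{x}_u = x \text{ and } \mathbf{x}_v = y}$; clearly $\Pr[B_e] = 1/\ell^2$.

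Define $\Gamma(e) \defeq \set{e' \in E(H^\ast) \,:\, e' \text{ has an endpoint in } L(u) \cup L(v)}$. Since every edge of $H^\ast$ joins vertices in distinct parts of the partition $\set{L(u)\,:\, u \in V(G)}$, each edge incident to $L(u)$ is counted exactly once in $\sum_{x' \in L(u)} \deg^\ast_{\Cov{H}}(x')$, so
\[
|\Gamma(e)| \;\leq\; \sum_{x' \in L(u)} \deg^\ast_{\Cov{H}}(x') \,+\, \sum_{y' \in L(v)} \deg^\ast_{\Cov{H}}(y') \;\leq\; 2\ell \cdot (\ell/8) \;=\; \ell^2/4.
\]
For any $e' \notin \Gamma(e)$, the event $B_{e'}$ is a function of $\set{\mathbf{x}_w\,:\, w \notin \set{u,v}}$, hence independent of the pair $(\mathbf{x}_u, \mathbf{x}_v)$; therefore, for every $Z \subseteq E(H^\ast) \setminus \Gamma(e)$ one gets $\Pr[B_e \mid \bigcap_{e' \in Z} \overline{B_{e'}}] = 1/\ell^2$. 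Lemma~\ref{lemma:LLL} then applies with $p \defeq 1/\ell^2$ and $d \defeq \ell^2/4$, giving $4pd = 1$, so with positive probability no $B_e$ holds and $\mathbf{I}$ is the desired $\Cov{H}$\=/coloring. I do not foresee a substantive obstacle; the only points that need care are the list-shrinking pre-processing and the counting bound for $|\Gamma(e)|$, which together arrange that the LLL hypothesis is met with equality.
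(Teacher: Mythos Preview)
Your proof is correct and essentially identical to the paper's own argument: both trim the lists to size exactly $\ell$, pick one vertex per list independently and uniformly, define a bad event $B_{xy}$ for each edge of $H^\ast$, bound $|\Gamma(xy)| \leq 2\ell\cdot(\ell/8) = \ell^2/4$, and apply the symmetric LLL with $4pd = 1$. Your write-up is slightly more explicit about the independence structure, but the strategy and all quantitative steps match.
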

	\begin{proof}
		%Let $\Cov{H} = (L, H)$ be a cover of a graph $G$ and let $\ell$ be a positive integer such that $|L(u)| \geq \ell$ for all $u \in V(G)$ and $\deg_{\Cov{H}}^\ast(x) \leq \ell/8$ for all $x \in V(H)$.
		If necessary, we may remove some vertices from $H$ to arrange that %the cover $\Cov{H}$ is $\ell$-fold.
		arrange that $|L(u)| = \ell$ for all $u \in V(G)$.
	Let $\mathbf{I}$ be a random subset of $V(H)$ obtained by choosing, independently and uniformly, a single vertex from each list $L(u)$. For $xy \in E(H^\ast)$, let $B_{xy}$ denote the random event $\event{\set{x, y} \subseteq \mathbf{I}}$. Then $\mathbf{I}$ is an independent set, and hence an $\Cov{H}$\=/coloring, precisely when none of the events $B_{xy}$ happen.
	By definition, $\Pr\left[B_{xy}\right] = \ell^{-2}$. Let $u$, $v \in V(G)$ be such that $x \in L(u)$, $y \in L(v)$ and define
	\[
	\Gamma(xy) \defeq \set{x'y' \in E(H^\ast) \,:\, x' \in L(u) \text{ or } y' \in L(v)}.
	\] 
	Then
	\[\label{eq:degree}
	|\Gamma(xy)| \leq \sum_{x' \in L(u)} \deg_{\Cov{H}}^\ast(x') + \sum_{y' \in L(v)} \deg_{\Cov{H}}^\ast(y') \leq 2 \cdot \ell \cdot \ell/8 =  \ell^2/4.
	\]
	Since $B_{xy}$ is mutually independent from the events $B_{x'y'}$ with $x'y' \not \in \Gamma(xy)$, an application of the~LLL proves that $\mathbf{I}$ is an $\Cov{H}$\=/coloring of~$G$ with positive probability, as desired.
	\end{proof}

	\subsection*{Acknowledgments} I am very grateful to the anonymous referees for their valuable comments.
	
	{
	\printbibliography}

\end{document}